\documentclass[11pt, english]{article}
\usepackage{amssymb, amsmath, graphicx}
\graphicspath{{Documents}}
\usepackage[english]{babel}
\usepackage[utf8]{inputenc}
\usepackage{hyperref}
\hypersetup{hidelinks}
\usepackage{parskip}
\usepackage{amsthm}
\usepackage{mathtools}
\usepackage{relsize}
\usepackage[backend=biber]{biblatex}
\bibliography{StarvedPolytopes}{}

\newtheorem{theorem}{Theorem}[section]
\newtheorem{proposition}[theorem]{Proposition}
\newtheorem{lemma}[theorem]{Lemma}
\newtheorem{corollary}[theorem]{Corollary}
\newtheorem{definition}[theorem]{Definition}
\newtheorem{remark}[theorem]{Remark}
\newtheorem{example}[theorem]{Example}
\newtheorem{algorithm}[theorem]{Algorithm}
\newtheorem{question}[theorem]{Question}

\DeclareMathOperator{\Relint}{RelInt}
\DeclareMathOperator{\Relbd}{RelBd}
\DeclareMathOperator{\Sym}{Sym}
\DeclareMathOperator{\rank}{rank}

\title{Hyperbolic polynomials and starved polytopes}
\author{Arne Lien\footnote{This work has been supported by European Union’s Horizon 2020 research and innovation programme under
the Marie Sklodowska-Curie Actions, grant agreement 813211 (POEMA).}}
\date{\vspace{-4ex}}

\newcommand{\Addresses}{{
  \bigskip
  \footnotesize

  \textsc{A.~Lien, Fachbereich Mathematik und Statistik, Universität Konstanz,
    78457 Konstanz, Germany}\newline\nopagebreak
  \textit{E-mail address}: \texttt{arne.lien@uni-konstanz.de}
}}

\begin{document}

\maketitle

\begin{abstract}
We study sets of univariate hyperbolic polynomials that share the same first few coefficients and show that they have a natural combinatorial description akin to that of polytopes. We define a stratification of such sets in terms of root arrangements of hyperbolic polynomials and show that any stratum is either empty, a point or of maximal dimension and in the latter case we characterise its relative interior. This is used to show that the poset of strata is a graded, atomic and coatomic lattice and to provide an algorithm for computing which root arrangements are realised in such sets of hyperbolic polynomials.
\end{abstract}

The topic of this article is the study of sets of univariate hyperbolic polynomials that share the same first few coefficients. The motivation for studying such sets stems from the article \cite{riener2012degree}, where they were used to provide a new proof of Timofte's degree and half degree principle for symmetric polynomials. Therefore the sets are deeply connected with the study of multivariate symmetric polynomials.

A natural way of describing the root arrangement of a hyperbolic polynomial is to construct its partition of multiplicities. However, by also considering in which order the roots arise we get a finer description of the polynomials root arrangement which we call its composition. We shall see that by using compositions to stratify the set of hyperbolic polynomials, we get a lattice of strata that is graded, atomic and coatomic.

To establish these combinatorial properties of the set of strata we show that the strata are connected to a type of symmetric, real algebraic set called Vandermonde varieties. This connection allows us to show that the relative interior of a stratum consists of the polynomials with the largest number of distinct roots and that a stratum is either empty, a point or of the generic dimension of a nonempty Vandermonde variety.

We begin in Section \ref{intro} by introducing the sets that we are studying and defining our stratification. We look into the following example of a set of degree $5$ hyperbolic polynomials with the same first three coefficients:
\begin{center}
\includegraphics[scale=0.665]{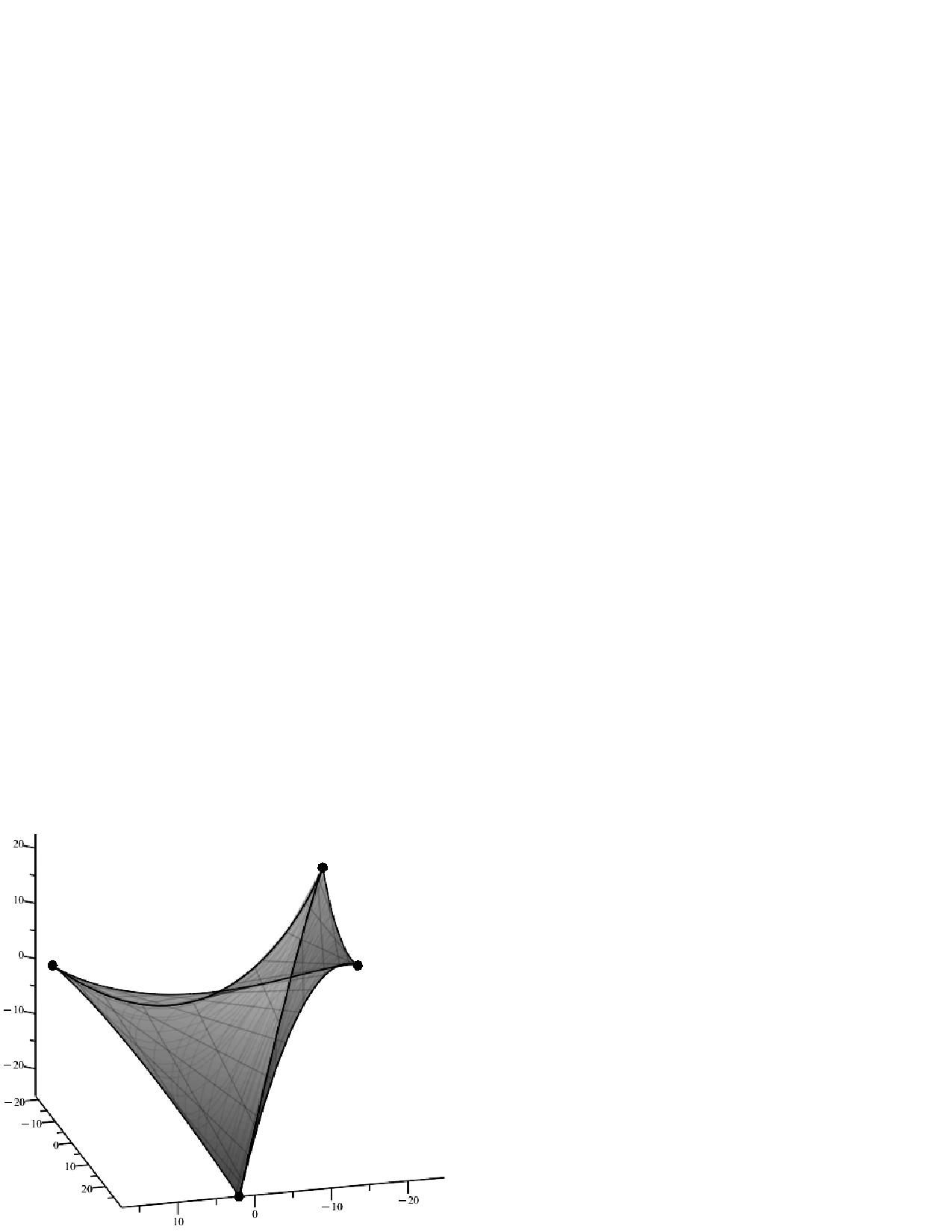}.
\end{center}
Then we show in Proposition \ref{prop:lattice} that the collection of strata, partially ordered by inclusion, is a lattice.

In Section \ref{section:vand} we show that a result from \cite{kostov1989geometric} on Vandermonde varieties can be used to describe the relative interior of the strata (Theorem \ref{thm:relint}) and to show that the strata are either empty, a point or of maximal dimension (Theorem \ref{thm:consequences}).

These results are then used in Section \ref{section:comb} to establish Theorem \ref{thm:latticeprops}, which says that the poset of strata is a graded, atomic and coatomic lattice. Finally this leads us to Algorithm \ref{alg:comps} that determines which compositions occur in our sets.

We finish with Section \ref{starve} where we discuss how a result from \cite{meguerditchian1991geometrie} on the discriminant and subdiscriminants implies that the boundary of the sets of hyperbolic polynomials have a concave-like property. In combination with Theorem \ref{thm:latticeprops}, this leads to the description "starved polytopes".

\textbf{\textit{Acknowledgements.}} I am very grateful to Claus Scheiderer for all the helpful discussions, critiques and advice along the way. I am also very grateful to Cordian Riener for bringing to my attention to some of the literature on the topic and for greatly simplifying the argument behind Theorem \ref{thm:consequences}. Lastly I would also like to thank Tobias Metzlaff whose suggestion led to the pretty picture above.

\section{Stratification}\label{intro}
\setcounter{section}{1}
\setcounter{subsection}{1}

We start off by defining the sets of univariate hyperbolic polynomials and showing how we stratify these sets. Then we look closer at the example from the introduction. We finish this section by showing that the collection of strata, partially ordered by inclusion, is a lattice.

\begin{definition}
A univariate polynomial $f\in\mathbb{R}[t]$ is called \textbf{hyperbolic} if all its roots are real.
\end{definition}

Given a hyperbolic polynomial $f$ of degree $d$, we will denote by $H_s(f)$ the set of all other hyperbolic polynomials of degree $d$ with the same $s+1$ first coefficients as $f$. That is, if $f=f_0t^d+f_1t^{d-1}+...+f_d$ is hyperbolic, then $$H_s(f):= \{h\in\mathbb{R}[t]|h \text{ is hyperbolic and }h_i =f_i \ \forall \ i \leq s\}.$$

If $a=(a_1,...,a_d)$ are the roots of $f$ then it is well known that $f_i = (-1)^ie_i(a)$, where $e_i$ is the $i^{th}$ elementary symmetric polynomial in $d$ variables. Thus the subscript $s$ refers to the number of fixed elementary symmetric polynomials.

We will let $f$ be some monic, hyperbolic polynomial of degree $d\geq 1$ throughout this article. Also, we will refer to $H_s(f)$ as a \textbf{starved polytope} although we have yet to establish this as a good description.

Often a polynomial $h=t^d+h_1t^{d-1}+...+h_d\in H_s(f)$ will be identified with the point $(h_{s+1},...,h_d)\in \mathbb{R}^{d-s}$ without specifying this change of basis. Thus when we consider topological questions, we will be equipping $H_s(f)$ with the subspace topology of the Euclidean topology on $\mathbb{R}^{d-s}$.

To stratify the starved polytopes we introduce compositions and a corresponding partial order.

\begin{definition}
A \textbf{composition} of $d$ is a tuple of positive integers, \\$u =(u_1,...,u_l)$, with $$\sum_{i=1}^lu_i=d.$$ The integers $u_i$ are called the \textbf{parts} of $u$ and $\ell(u):=l$ the \textbf{length} of $u$.
\end{definition}

We often use the shorthand $(1^d)$ for the composition whose parts are all equal to $1$. Also, when nothing further is specified, we will let $u$ denote a composition of $d$.

\begin{definition}
Let $u$ and $v$ be two compositions of a positive integer $d$. Then $v < u$ if $v$ can be obtained from $u$ by replacing some of the commas in $u$ with plus signs.
\end{definition}

To any hyperbolic polynomial $f$ of degree $d$, we associate a composition of $d$ the following way: let $a_1<a_2...<a_l$, be the distinct roots of $f$ with respective multiplicities $m_1,...,m_l$. Then the \textbf{composition of $f$} is the tuple $v(f) := (m_1,...,m_l)$, which is a composition of $d$.

The strata we will look at are given by $$H_s^u(f):=\{h\in H_s(f)|v(h)\leq u\}.$$
Note that $H^{(1^d)}_s(f) = H_s(f)$ since any composition of $d$ is smaller than $(1^d)$.

\begin{example}\label{ex:tetra}
Let $d=5$ and $s=2$ and let $$f=(t+\pi)(t+\sqrt{2})t(t-1,23456789123456789)(t-e),$$ then if we map the last three coefficients of the polynomials in $H_2(f)$, we get the three dimensional picture from the introduction.

\includegraphics[scale=0.53]{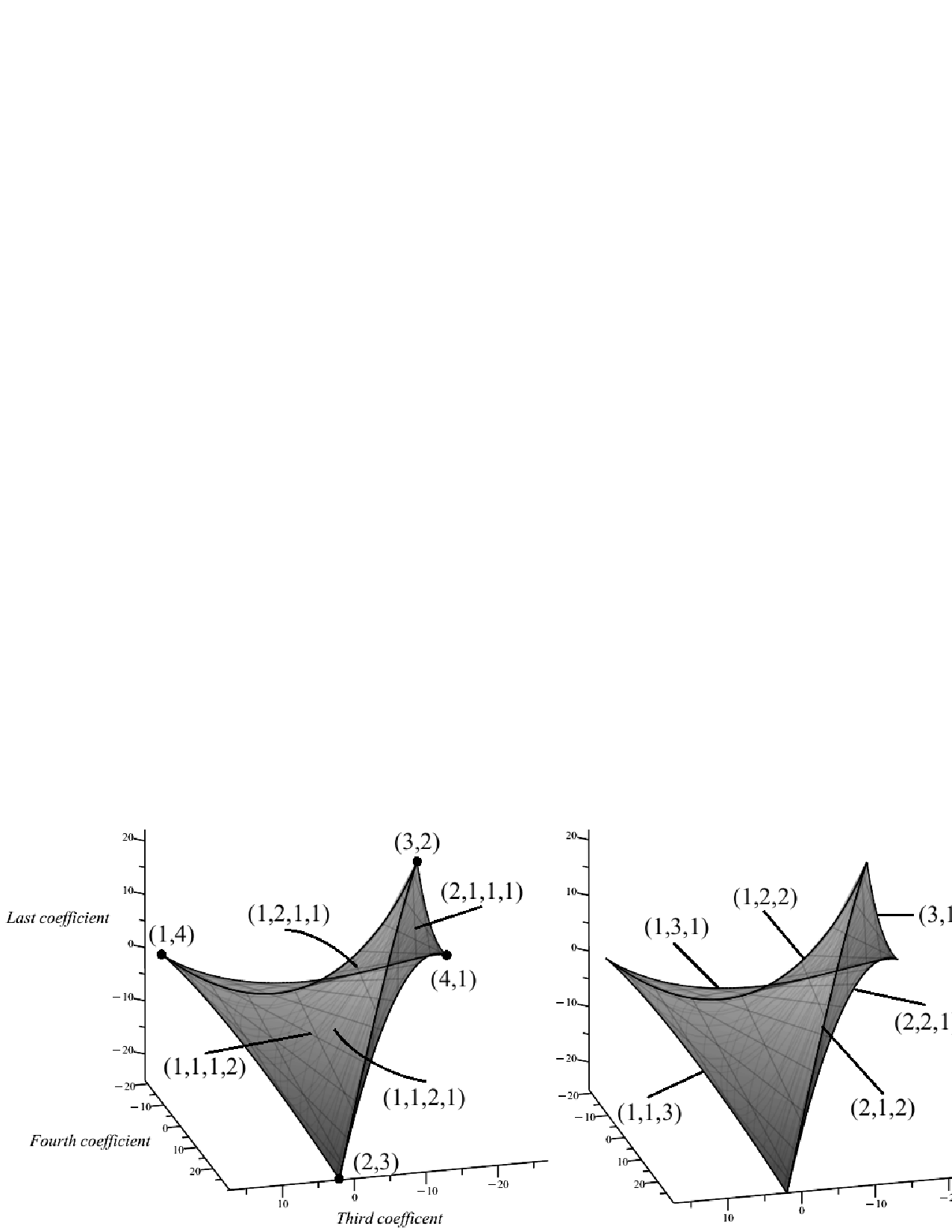}

The polynomials with no repeated roots make up the interior and the other compositions lies on the pieces of the boundary as indicated by the picture. For instance, if $u=(2,1,2)$, then the compositions $(2,3)$ and $(3,2)$ are the compositions in the picture which is smaller than $u$. Thus $H_2^u(f)$ correspond to the closest one dimensional piece of the boundary (including its endpoints).
\end{example}

\begin{remark}
The polynomial in Example \ref{ex:tetra} was chosen to give a fair representation of $H_2(f)$ for degree $5$ and its roots was chosen to illustrate that the resulting set is not dependant on the roots being particularly "nice". However, the roots where grouped relatively close together to get a picture that is suitable for an A4 page and not one that is too stretched or skewed.
\end{remark}

Next we show that the poset of strata is a lattice. If $a$ and $b$ are elements of a lattice, we denote their join by $a\vee b$ and their meet by $a\wedge b$.

\begin{lemma}\label{lem:latticecomps}
The poset of compositions of $d$ is a lattice.
\end{lemma}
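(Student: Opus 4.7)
The plan is to identify the poset of compositions of $d$ with the Boolean lattice on $\{1, 2, \ldots, d-1\}$ via partial sums, so that the lattice property comes for free. To a composition $u = (u_1, \ldots, u_l)$ I associate the set of its partial sums
\[
S_u := \{u_1,\ u_1+u_2,\ \ldots,\ u_1+\cdots+u_{l-1}\} \subseteq \{1, \ldots, d-1\}.
\]
This map is a bijection: writing $d = 1 + 1 + \cdots + 1$, a composition of $d$ corresponds to choosing a subset of the $d-1$ separators to be commas (the rest being plus signs), and $S_u$ records precisely the cumulative positions of those commas. The extremes are the expected ones, with $(1^d) \leftrightarrow \{1,\ldots,d-1\}$ and $(d) \leftrightarrow \emptyset$.

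Next I would check that this bijection is order preserving in the appropriate sense. Replacing a single comma in $u$ by a plus sign merges the corresponding two adjacent parts $u_i, u_{i+1}$ into $u_i + u_{i+1}$, which has the effect of deleting exactly the partial sum $u_1 + \cdots + u_i$ from $S_u$ while leaving all others unchanged. Iterating this observation, $v \leq u$ if and only if $S_v \subseteq S_u$. Hence $u \mapsto S_u$ is an order isomorphism from the poset of compositions of $d$ to the Boolean lattice $2^{\{1,\ldots,d-1\}}$, which is a lattice with meet and join given by intersection and union. Pulling back, the meet $u \wedge v$ is the unique composition whose set of partial sums is $S_u \cap S_v$ (the common cuts), and the join $u \vee v$ is the composition whose partial sums form $S_u \cup S_v$.

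There is no real obstacle in this approach; the only step that requires actual verification is the unwinding that a single comma-to-plus replacement corresponds to removing exactly one element from $S_u$, and this is a direct consequence of how partial sums behave under merging adjacent parts. Everything else is formal transport of structure along the bijection.
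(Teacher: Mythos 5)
Your proposal is correct and rests on exactly the same key observation as the paper's proof, namely that $v\leq u$ if and only if the set of partial sums of $v$ is contained in that of $u$, with the join realised by taking unions of partial-sum sets. The only (minor) difference is that you package this as a full order isomorphism with the Boolean lattice $2^{\{1,\ldots,d-1\}}$, which also gives the meet explicitly as the intersection of partial-sum sets, whereas the paper constructs only the join directly and obtains the meet abstractly as the join of all common lower bounds.
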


\begin{proof}
We prove this by explicitly constructing the join of two compositions of d, $u$ and $v$. Having done so, the existence and uniqueness of the meet is also established since the meet of $u$ and $v$ is just the join of all compositions smaller than both $u$ and $v$.

If $a$ and $b$ are two compositions of $d$, note that $a\leq b$ if and only if $$\{a_1,a_1+a_2,...,d\}\subseteq \{b_1,b_1+b_2,...,d\}.$$ So let $M=\{u_1,u_1+u_2,...,d\}\cup \{v_1,v_1+v_2,...,d\}$ and construct the tuple $
m=(m_1,m_2,...,m_l)$ containing all the distinct elements of $M$ and where $m_i<m_{i+1}$ for any $i\in [l-1]$ (note that $m_l=d$).

Next we construct the composition $w=(m_1,m_2-m_1,m_3-m_2,...,m_l-m_{l-1})$. This is naturally a composition of $d$ and since $\{u_1,u_1+u_2,...,d\}\subseteq M$ and $\{v_1,v_1+v_2,...,d\}\subset M$, $w$ is greater than both $u$ and $v$. Also, since $M$ is by construction the unique minimal set that contains both $\{u_1,u_1+u_2,...,d\}$ and $\{v_1,v_1+v_2,...,d\}$, then $w$ is the join of $u$ and $v$.
\end{proof}

From Lemma \ref{lem:latticecomps} we immediately get that the set of strata of $H_s(f)$, partially ordered by inclusion, form a lattice. To see this let us determine the meet of two faces of $H_s(f)$, $H^u_s(f)$ and $H_s^v(f)$.

The meet of two strata must be contained in their intersection since the partial order is given by inclusion. Also, by definition the of the strata we have
$$H^u_s(f)\cap H_s^v(f)=H_s^{u\wedge v}(f),$$ so the intersection is a stratum of $H_s(f)$. Thus we have that
$$H^u_s(f)\wedge H_s^v(f)=H_s^{u\wedge v}(f)$$ and we have shown the following:

\begin{proposition}\label{prop:lattice}
The set of strata of $H_s(f)$, partially ordered by inclusion, is a lattice.
\end{proposition}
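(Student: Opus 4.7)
My plan is to reduce the claim to Lemma \ref{lem:latticecomps} by showing that the assignment $u \mapsto H_s^u(f)$ converts meets of compositions into intersections of strata. Because intersections in a poset ordered by inclusion are automatically greatest lower bounds whenever they remain inside the poset, this will immediately yield meets in the collection of strata.

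The central step is to verify the identity
\[
H_s^u(f) \cap H_s^v(f) = H_s^{u \wedge v}(f),
\]
where $u \wedge v$ is the meet in the lattice of compositions supplied by Lemma \ref{lem:latticecomps}. This follows from the universal property of $u \wedge v$: a polynomial $h$ lies in both $H_s^u(f)$ and $H_s^v(f)$ exactly when $v(h) \leq u$ and $v(h) \leq v$, which is equivalent to $v(h) \leq u \wedge v$. Hence the intersection is itself a stratum, and since it is contained in both $H_s^u(f)$ and $H_s^v(f)$, it serves as the meet in the poset of strata.

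For joins I would invoke the standard fact that a finite meet-semilattice with a maximum element is automatically a lattice; the maximum here is $H_s^{(1^d)}(f) = H_s(f)$, which dominates every stratum. Alternatively one can argue directly that $H_s^{u \vee v}(f)$ is the join, or realise the join as the meet of all strata containing both $H_s^u(f)$ and $H_s^v(f)$.

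I do not anticipate any real obstacle here, since the lattice structure is essentially inherited from the composition lattice. The only mild subtlety worth flagging is that the map $u \mapsto H_s^u(f)$ need not be injective, as some compositions may fail to be realised by any polynomial in $H_s(f)$; consequently the lattice operations must be phrased intrinsically on the poset of strata rather than transferred along a bijection from compositions.
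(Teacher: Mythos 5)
Your argument is correct and essentially the paper's own: both establish $H_s^u(f)\cap H_s^v(f)=H_s^{u\wedge v}(f)$ using the meet from Lemma \ref{lem:latticecomps}, conclude that intersections of strata are strata and hence give meets, and obtain joins for free from finiteness together with the top element $H_s^{(1^d)}(f)=H_s(f)$. One caution on your parenthetical alternative: it is not valid in general to claim that $H_s^{u\vee v}(f)$ is itself the join, since unrealised compositions can make a strictly smaller stratum contain both $H_s^u(f)$ and $H_s^v(f)$ --- precisely the non-injectivity of $u\mapsto H_s^u(f)$ that you flag, and which the paper addresses later by passing to minimal labelling compositions --- but this aside does not affect your main argument.
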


However, it is worth pointing out that just because the meet of $H^u_s(f)$ and $H_s^v(f)$ is $H_s^{u\wedge v}(f)$, this does not a priori mean that $u\wedge v$ is the only composition, $w$, such that $H^u_s(f)\wedge H_s^v(f)=H_s^w(f)$. For instance if $H_s^u(f)$ is empty, then $H^u_s(f)\wedge H_s^v(f)=H_s^u(f)$ even if $u\wedge v \neq u$. So different compositions may label the same stratum and this does indeed happen sometimes.

A similar problem can arise when we consider joins of strata. However in this case, we shall see in Section \ref{section:comb} that this can be overcome by requiring $u$ and $v$ to be the minimal compositions that can label the strata $H_s^u(f)$ and $H_s^v(f)$.

\section{Vandermonde varieties}\label{section:vand}

To describe the combinatorial structure of the lattice of strata we need to establish some geometric properties of our strata. In particular we will show that Arnold's, Givental's and Kostov's work on so called "Vandermonde varieties" (see \cite{arnol1986hyperbolic}, \cite{givental1987moments} and \cite{kostov1989geometric}) implies that $H_s^u(f)$ is either empty, a point or of dimension $\ell(u)-s$ and that in the latter case the polynomials with composition $u$ make up the relative interior.

To see the connection to their work, let the symmetric group $\Sym([d])$ act on $\mathbb{R}[x_1,...,x_d]$ by permuting the variables. Then it is well known that the elementary symmetric polynomials generate the ring of invariants (see Chapter 7.1 in \cite{cox2013ideals}). The induced action on $\mathbb{R}^d$ permutes the coordinates of the points in $\mathbb{R}^d$ and the orbit space $\mathbb{R}^d/\Sym([d])$ can be identified with the image of $\mathbb{R}^d$ under the mapping $\Pi:\mathbb{R}^d\to\mathbb{R}^d$, where $$\Pi(x)=t^d-e_1(x)t^{d-1}+...+(-1)^de_d(x)$$ and $e_i(x)$ is the $i^{th}$ elementary symmetric polynomial.

Thus the orbit space can be identified with the monic, hyperbolic polynomials of degree $d$ and by restricting $\Pi$ to the set $K_d:=\{x\in\mathbb{R}^d|x_1\leq...\leq x_d\}$, we obtain a bijection between $K_d$ and $\Pi(\mathbb{R}^d)$. So we see that the starved polytopes can be thought of as sections of the orbit space obtained by intersecting it with certain affine hyperplanes.

Similarly, if $u=(u_1,...,u_l)$ and $h\in H_s^u(f)$ has the roots $a_1\leq ... \leq a_l$, then $$h(t) = t^d-e_1(a_u)t^{d-1}+e_2(a_u)t^{d-2}...+(-1)^de_d(a_u),$$ where $a_u:=(a_1,...,a_1,...,a_l,...,a_l)$ and $a_i$ is repeated $u_i$ times. So if we define the real algebraic set $$V^u_s(f):=\{x\in\mathbb{R}^l|e_i(x_u)=(-1)^if_i \ \forall \ i\in [s]\},$$ then we see that $H_s^u(f)$ is the image of $V^u_s(f)\cap K_l$ under the mapping
$$\Pi_u(x):= t^d-e_1(x_u)t^{d-1}+...+(-1)^de_d(x_u).$$

Another set of generators for the invariant ring $\mathbb{R}[x_1,...,x_d]^{\Sym([d])}$ is the power sums $p_i(x):=\sum_{j\in [d]}x_j^i$, where $i\in [d]$. And since the first $s$ power sums can be expressed as polynomials in the first $s$ elementary symmetric polynomials using Newton's identities, we can equivalently define $V_s^u(f)$ as $\{x\in\mathbb{R}^l|p_i(x_u)=c_i \ \forall \ i\in [s]\}$, where $c_1,...,c_s\in\mathbb{R}$ are obtained from $-f_1,...,(-1)^sf_s$ using Newton's identities. Such sets are referred to as Vandermonde varieties since the Jacobian of the first $s$ power sums is a constant multiple of a Vandermonde determinant.

To make use of the previous work on Vandermonde varieties, we first need to establish that $\Pi_u$ is a homeomorphism. Note that, as with $H_s(f)$, we will be equipping the sets $V_s^u(f), K_l$ and $H_s^u(f)$ with the subspace topology of the Euclidean topology.

Before we proceed, let $B_\epsilon(a)$ denote the real open ball about $a\in \mathbb{R}^k$, of radius $\epsilon > 0$, and let $\overline{B_\epsilon(a)}$ denote its closure. Similarly, let $D_\epsilon(z)$ denote the complex open ball about $z\in \mathbb{C}^k$, of radius $\epsilon$, and let $\overline{D_\epsilon(z)}$ denote its closure.

\begin{lemma}\label{lem:closedViete}
Let $\ell(u)=l$, then $H_s^u(f)$ is closed in $\mathbb{R}^{d-s}$ and $$\Pi_u:V^u_s(f)\cap K_l\to H_s^u(f)$$ is a homeomorphism.
\end{lemma}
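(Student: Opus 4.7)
The plan is to exhibit $\Pi_u$ as a continuous, proper bijection from $V_s^u(f)\cap K_l$ onto $H_s^u(f)$, viewed as a subset of $\mathbb{R}^{d-s}$. Once this is done, both claims follow at once: a continuous proper map to a locally compact Hausdorff space has closed image (giving closedness of $H_s^u(f)$ in $\mathbb{R}^{d-s}$), and a continuous proper bijection between Hausdorff spaces is a homeomorphism. Continuity of $\Pi_u$ is immediate because the coefficients of $\Pi_u(x)$ are polynomials in the $x_i$.

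For the image being contained in $H_s^u(f)$, if $x\in V_s^u(f)\cap K_l$ then $\Pi_u(x)$ is monic, hyperbolic, has its first $s+1$ coefficients equal to those of $f$ by definition of $V_s^u(f)$, and has composition obtained from $u$ by merging consecutive parts $u_i,u_{i+1}$ whenever $x_i=x_{i+1}$, hence $v(\Pi_u(x))\leq u$. For surjectivity, take $h\in H_s^u(f)$ and list its roots with multiplicity as $r_1\leq\cdots\leq r_d$. The hypothesis $v(h)\leq u$ is equivalent to the inclusion of partial sums of $v(h)$ in those of $u$, which means that each of the blocks $(r_{u_1+\cdots+u_{i-1}+1},\ldots,r_{u_1+\cdots+u_i})$ consists of equal values; setting $x_i$ to that common value produces a unique $x\in K_l$ with $x_u=(r_1,\ldots,r_d)$, so $\Pi_u(x)=h$ and $x\in V_s^u(f)$. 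Injectivity on $K_l$ is clear because $x$ is determined by the sorted multiset $x_u$.

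For properness, let $C\subseteq\mathbb{R}^{d-s}$ be compact. Every $h\in C$ has all coefficients bounded (fixed for the first $s+1$, bounded by $C$ for the rest), so Cauchy's root bound yields a uniform bound on the roots of such $h$. Hence $\Pi_u^{-1}(C)\cap V_s^u(f)\cap K_l$ is closed (as the intersection of the preimage of a closed set with the closed set $V_s^u(f)\cap K_l$) and bounded in $\mathbb{R}^l$, i.e., compact.

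The delicate step is the surjectivity argument: one must translate the combinatorial condition $v(h)\leq u$ into the precise statement that the sorted multiset of roots of $h$ can be read off block by block from $u$ into a well-defined element of $K_l$. Once this correspondence is in place, the rest of the proof is a textbook application of properness together with continuous dependence of the roots on the coefficients.
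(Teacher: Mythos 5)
Your proof is correct, but it takes a different route from the paper. The paper asserts bijectivity of $\Pi_u$ without comment and concentrates on showing that $\Pi_u$ is a \emph{closed map}: for a closed $S\subseteq V_s^u(f)\cap K_l$ it embeds $S$ into $\mathbb{C}^d$ via $x\mapsto x_u$, and for $h\notin\Pi_u(S)$ it invokes the theorem on continuity of the roots of a polynomial as functions of its coefficients (the cited result of Uherka--Sergott) to produce a $\delta$-ball of coefficient vectors around $h$ that misses $\Pi_u(S)$; closedness of $H_s^u(f)$ and continuity of $\Pi_u^{-1}$ then follow simultaneously. You instead establish \emph{properness} of $\Pi_u$ as a map into $\mathbb{R}^{d-s}$, using only Cauchy's bound on the roots in terms of the coefficients, and then quote the standard facts that a proper continuous map into a locally compact Hausdorff space is closed and that a closed continuous bijection is a homeomorphism. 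The analytic input you need (a root bound) is more elementary than the paper's (root continuity), and you also supply the surjectivity/injectivity argument --- the translation of $v(h)\leq u$ into constancy of the roots on each $u$-block --- which the paper leaves implicit; that part of your write-up is correct and is in fact a useful addition. Two small remarks: the phrase ``a continuous proper bijection between Hausdorff spaces is a homeomorphism'' is not true for arbitrary Hausdorff targets (one needs the target to be, say, locally compact or metrizable so that proper implies closed), but this is harmless here since $H_s^u(f)$ is a subspace of $\mathbb{R}^{d-s}$, indeed closed once your image argument is run; and in the properness step you should view $\Pi_u$ as the globally defined polynomial map $x\mapsto\bigl((-1)^{s+1}e_{s+1}(x_u),\dots,(-1)^de_d(x_u)\bigr)$ on $\mathbb{R}^l$ so that ``$\Pi_u^{-1}(C)$ is closed'' is literally the preimage of a closed set under a continuous map, exactly as you intend.
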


\begin{proof}
Note that $\Pi_u$ is a bijection and a polynomial mapping, thus it is a continuous bijection. To see that the inverse map is continuous and that $H_s^u(f)$ is closed in $\mathbb{R}^{d-s}$, we show that the image of closed sets in $V^u_s(f)\cap K_l$ are closed in $\mathbb{R}^{d-s}$.

Let $S$ be a closed subset of $V^u_s(f)\cap K_l$, then since $V^u_s(f)$ and $K_l$ are closed in $\mathbb{R}^l$, so is $S$. Let $$\iota_u:V^u_s(f)\cap K_l\to \mathbb{C}^d$$ be the inclusion $(x_1,...,x_l) \mapsto x_u=(x_1,...,x_1,...,x_l,...,x_l)$, where $x_i$ is repeated $u_i$ times. Then $\Pi_u(x) = (\Pi \circ \iota_u)(x)$ and clearly $\iota_u(S)$ is a closed subset of $\mathbb{C}^d$.

Let $h= t^d+h_1t^{d-1}+...+h_d\notin \Pi_u(S)$ have the roots $a=(a_1,...,a_d)$ and let $h_i=f_i \ \forall \ i\in [s]$. Let $\epsilon>0$ be such that $D_\epsilon(\sigma(a))\cap \iota_u(S)$ is empty for any $\sigma\in \Sym([d])$. If $b_1,...,b_k$ are the distinct roots of $h$ with respective multiplicities $v_1,..,v_k$, then there is a $\delta>0$ such that any polynomial, $g$, of degree $d$, with $|h_i-g_i|\leq\delta$ for all $i\in [d]$ has exactly $v_i$ zeroes in $D_\epsilon(b_i)$.

The proof of this statement can be found in \cite{uherka1977continuous}. Thus, since $D_\epsilon(\sigma(a))\cap \iota_u(S)$ is empty for any $\sigma\in \Sym([d])$, then so is $B_\delta(h)\cap \Pi_u(S)$ and therefore is $\Pi_u(S)$ closed in $\mathbb{R}^{d-s}$.
\end{proof}

\begin{proposition}\label{prop:contractible}
The sets $V^u_s(f)\cap K_l$ and $H_s^u(f)$ are contractible or empty.
\end{proposition}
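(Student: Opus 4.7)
The plan is to reduce to a statement about Vandermonde varieties and then invoke Kostov's theorem. By Lemma \ref{lem:closedViete}, the map $\Pi_u$ is a homeomorphism from $V^u_s(f)\cap K_l$ onto $H^u_s(f)$, so contractibility (resp.\ emptiness) of either space is equivalent to that of the other. It therefore suffices to prove that $V^u_s(f)\cap K_l$ is contractible whenever non-empty, and I assume non-emptiness from now on.

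The set $V^u_s(f)\cap K_l$ is a weighted Vandermonde variety inside the Weyl chamber $K_l$: it is cut out by the $s$ equations $\sum_{j=1}^l u_j x_j^i = c_i$ for $i=1,\ldots,s$, together with $x_1\leq\cdots\leq x_l$. The main geometric result in \cite{kostov1989geometric} (building on \cite{arnol1986hyperbolic, givental1987moments}) asserts that classical Vandermonde varieties intersected with the standard Weyl chamber are either empty or contractible, by exhibiting an explicit isotopy that spreads apart coincident coordinates while preserving the prescribed power sums. I would apply this result, observing that Kostov's construction depends only on non-singularity of the Jacobian of the defining constraints with respect to any $s$ of the coordinates, which in the present setting equals a Vandermonde determinant scaled by the positive factors $u_j$ and is hence non-zero on the open chamber $x_1<\cdots<x_l$. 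Since positive weights do not affect non-singularity, Kostov's isotopy adapts verbatim and furnishes a contraction of $V^u_s(f)\cap K_l$ to a point.

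The step I expect to be the main obstacle is checking that Kostov's argument is indeed robust enough to accommodate arbitrary positive weights, since the cited statement is phrased for unweighted symmetric functions. As a safety net, should this adaptation require more care than expected, one can embed $V^u_s(f)\cap K_l$ into $K_d$ via $\iota_u$ and realise its image as the intersection of the classical Vandermonde variety in $K_d$ with the linear subspace forcing equality of coordinates within each $u$-block; one would then attempt to restrict a suitably $\Sym([d])$-equivariant version of Kostov's contraction to this block-diagonal subspace, so that the isotopy preserves the block structure throughout. Either way, once the contractibility of $V^u_s(f)\cap K_l$ is established, the homeomorphism from Lemma \ref{lem:closedViete} transports it to $H^u_s(f)$ and the proposition follows.
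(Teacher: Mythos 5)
Your argument is essentially the paper's own proof: rewrite $V^u_s(f)$ via Newton's identities in terms of power sums, invoke Theorem 1.1 of \cite{kostov1989geometric} for the contractibility (or emptiness) of the intersection with the chamber $K_l$, and transport the conclusion to $H_s^u(f)$ through the homeomorphism of Lemma \ref{lem:closedViete}. The adaptation to positive weights that you flag as the main obstacle is not an issue, since the results of \cite{arnol1986hyperbolic}, \cite{givental1987moments} and \cite{kostov1989geometric} are formulated for weighted power sums with positive weights (as noted in the remark closing Section \ref{section:comb}), so the citation applies directly.
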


\begin{proof}
Firstly, we can use Newton's identities to define $V^u_s(f)$ in terms of the first $s$ power sums in $d$ variables. Then the proof that $V^u_s(f)\cap K_l$ is contractible or empty can be found in \cite{kostov1989geometric} (Theorem 1.1). By Lemma \ref{lem:closedViete} the map $\Pi_u : V^u_s(f)\cap K_l \to H_s^u(f)$ is a homeomorphism, thus $H_s^u(f)$ is contractible if it is nonempty.
\end{proof}

To see how this proposition can be used to further describe our strata we need some more definitions. First note that as $H_s^u(f)$ is the image of a semi algebraic set under a polynomial mapping, it is semi algebraic. Thus the dimension of the stratum is the maximum integer, $n$, such that $H_s^u(f)$ contains an open set which is homeomorphic to an open set of $\mathbb{R}^n$ (see Chapter 2.8 of \cite{bochnak2013real}).

\begin{definition}\label{def:relintbd}
If $H_s^u(f)$ is a nonempty stratum and $n=\dim(H_s^u(f))$, then
\begin{itemize}
\item the \textbf{relative interior} of $H_s^u(f)$ is the set of polynomials $h\in H_s^u(f)$ such that an open neighbourhood of $h$ is homeomorphic to an open set in $\mathbb{R}^{n}$ and 
\item the \textbf{relative boundary} of $H_s^u(f)$ is the set of polynomials in $H_s^u(f)$ which is not in the relative interior.
\end{itemize}
\end{definition}

We can use Proposition \ref{prop:contractible} to give a description of the relative interior and relative boundary of our strata and also determine their dimension. But the first consequence of the proposition that we need is the following:

\begin{lemma}\label{lem:point}
If $\ell(u)\leq s$, then $H_s^u(f)$ contains at most one polynomial.
\end{lemma}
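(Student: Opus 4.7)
The plan is to translate the claim to the Vandermonde-variety side via Lemma \ref{lem:closedViete} and then combine the contractibility from Proposition \ref{prop:contractible} with a Vandermonde Jacobian calculation to show the relevant set is finite. Contractibility then collapses a nonempty finite set to a single point.

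Let $l := \ell(u) \leq s$. By Lemma \ref{lem:closedViete} the map $\Pi_u$ is a homeomorphism from $V_s^u(f) \cap K_l$ onto $H_s^u(f)$, so it suffices to show the left-hand side has at most one element. Assume it is nonempty. By Proposition \ref{prop:contractible} it is contractible and hence connected, and it is also semi-algebraic; so showing it is finite is enough, since a connected nonempty finite set is a single point.

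The key computation lives on the open cone $K_l^{\circ} := \{x \in \mathbb{R}^l : x_1 < \cdots < x_l\}$. Consider the map $\Phi : K_l^{\circ} \to \mathbb{R}^l$ defined by $\Phi(x) = (p_1(x_u), \ldots, p_l(x_u))$. Its Jacobian at $x$ is the $l \times l$ matrix $(i\, u_j\, x_j^{i-1})_{i,j}$, whose determinant is $l!\, u_1 \cdots u_l \prod_{j<k}(x_k - x_j)$ and is therefore nonzero on $K_l^{\circ}$. Thus $\Phi$ is a local diffeomorphism there and its fibers are discrete; since $l \leq s$, we have $V_s^u(f) \cap K_l^{\circ} \subseteq \Phi^{-1}(c_1, \ldots, c_l)$, making $V_s^u(f) \cap K_l^{\circ}$ a discrete semi-algebraic set, and hence finite.

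The main obstacle is the boundary $K_l \setminus K_l^{\circ}$, where some coordinates coincide and the Jacobian drops rank. I would handle it by stratifying $K_l$ according to the adjacent-equality patterns of its points: each pattern is encoded by a composition $w \leq u$ with $\ell(w) =: l'$, and the associated stratum is naturally parametrised by $K_{l'}^{\circ}$ in such a way that $x_u = y_w$. Under this identification the restriction of $V_s^u(f)$ to the stratum becomes $V_s^w(f) \cap K_{l'}^{\circ}$. Since $l' \leq l \leq s$, the same Vandermonde Jacobian argument with $w$ in place of $u$ shows each of these sets is finite. There are only finitely many $w \leq u$, so $V_s^u(f) \cap K_l$ is a finite disjoint union of finite sets, and Proposition \ref{prop:contractible} then forces it to be a single point.
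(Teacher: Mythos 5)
Your proposal is correct and follows essentially the paper's own argument: the same Vandermonde Jacobian determinant $\ell(u)!\,u_1\cdots u_{\ell(u)}\prod_{j<k}(x_k-x_j)$ at points with distinct coordinates, a decomposition over the finitely many compositions $w\leq u$, and Proposition \ref{prop:contractible} to collapse a finite set to a single point. The only (minor) variation is that you get finiteness on each stratum from discreteness of a semi-algebraic fiber of a local diffeomorphism, whereas the paper applies contractibility of $V_k^w(f)\cap K_k$ once more to show each sub-stratum is itself a singleton.
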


\begin{proof}
Suppose $h\in H_s^u(f)$ has the distinct roots $a=(a_1,...,a_k)$ and composition $v=(v_1,...,v_k)$, then $k\leq \ell(u)\leq s$. If $k=1$, then $v=(d)$ and there is only one solution to the equation $$-e_1(x_v)=-dx_1=f_1.$$ And so we have $H_s^v(f)=H_k^v(f) = \{h\}$. If $k>1$, then as previously mentioned, $V_s^v(f)$ can be defined as 
$$\{x\in \mathbb{R}^k| p_1(x_u) = c_1,...,p_k(x_u) = c_k\},$$
where $p_i$ is the $i^{th}$ power sum in $d$ variables and $c_1,...,c_k\in \mathbb{R}$ are obtained from the numbers $-f_1,...,(-1)^kf_k$ using Newton's identities.

The map $F:\mathbb{R}^k\to\mathbb{R}^k$, where $F(x)=(p_1(x_v),...,p_k(x_v))$, is a continuously differentiable function whose Jacobian matrix is $(iv_jx_j^{i-1})_{i,j \leq k}$ and so its determinant is $$\prod_{i=1}^k iv_i \prod_{1\leq j< r\leq k} (x_j-x_r).$$ Since all the $a_i$'s are distinct, the determinant is nonzero at $a$. Thus the Jacobian matrix is invertible and by the inverse function theorem, $F$ is invertible on some neighbourhood $U$ of $F(a) = (c_1,...,c_k)$. By Proposition \ref{prop:contractible}, $V_k^v(f)\cap K_k$ is contractible and since $a$ is isolated in this set, it must be the only point there. Therefore we have $H_s^v(f)=H_k^v(f) = \{h\}$.

So for any composition $w\leq u$, that occurs in $H_s^u(f)$, we have that $H_s^w(f)$ is a point. Since there are finitely many compositions smaller than or equal to $u$, $H_s^u(f)$ contains finitely many points. But since $H_s^u(f)$ is contractible it can contain at most one point.
\end{proof}

We will let $$P^r:\mathbb{R}^k\to\mathbb{R}^{k-r}$$ denote projection that forgets the last $r$ coordinates. This map will help us describe the relative interior.

\begin{proposition}\label{prop:derhomeom}
If $l=\ell(u)> s$, then the map $$P^{d-l}:H_s^u(f) \to \mathbb{R}^{l-s}$$ is a homeomorphism onto its image and the image is closed in $\mathbb{R}^{l-s}$.
\end{proposition}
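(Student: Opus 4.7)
The approach is to transport the problem via Lemma \ref{lem:closedViete} onto the Vandermonde variety side. Since $\Pi_u \colon V_s^u(f) \cap K_l \to H_s^u(f)$ is already a homeomorphism, it suffices to show that the composition
\[
\phi := P^{d-l} \circ \Pi_u \colon V_s^u(f) \cap K_l \longrightarrow \mathbb{R}^{l-s}, \quad (a_1,\dots,a_l)\mapsto \bigl((-1)^{s+1}e_{s+1}(a_u),\dots,(-1)^l e_l(a_u)\bigr),
\]
is a proper injection; such a map into a Hausdorff space is automatically a homeomorphism onto a closed subset of the target. On $V_s^u(f) \cap K_l$ the values $e_k(a_u)=(-1)^k f_k$ are fixed for $k \in [s]$, so knowing $\phi(a)$ is equivalent to knowing all of $e_1(a_u),\dots,e_l(a_u)$, and by Newton's identities to knowing $p_1(a_u),\dots,p_l(a_u)$.

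For properness, a bound on $\phi(a)$ combined with the fixed first $s$ coefficients yields a bound on $p_2(a_u)=\sum_i u_i a_i^2$ as soon as $l \geq 2$ (the case $l=1$ forces $s=0$ and is trivial since then $\phi(a)=-d\,a_1$ is itself a coordinate). This bounds each $|a_i|$ uniformly, so preimages of compact sets under $\phi$ are precompact; hence $\phi$ is proper and its image is closed in $\mathbb{R}^{l-s}$.

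For injectivity, suppose $\phi(a)=\phi(b)$ with $a,b \in V_s^u(f)\cap K_l$. Then $p_k(a_u)=p_k(b_u)$ for every $k\in[l]$. The Jacobian of $\Psi\colon \mathbb{R}^l\to\mathbb{R}^l$, $a\mapsto(p_1(a_u),\dots,p_l(a_u))$, equals
\[
\det\bigl(j u_i a_i^{j-1}\bigr)_{i,j\in[l]} = l!\,u_1\cdots u_l \prod_{1\le i<j\le l}(a_j-a_i),
\]
which is nonzero precisely when the $a_i$ are pairwise distinct. By the inverse function theorem $\Psi$, and hence $\phi$, is a local diffeomorphism on the open chamber $\{a_1<\dots<a_l\}$. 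Combining this local injectivity with properness and the contractibility of $V_s^u(f) \cap K_l$ from Proposition \ref{prop:contractible}, a covering-space argument forces the fibres to be singletons: a proper map from a simply connected domain which restricts to a local diffeomorphism on a dense open subset has trivial monodromy. Points of $V_s^u(f)\cap K_l$ with coincident coordinates $a_i=a_{i+1}$ correspond under $\Pi_u$ to polynomials of composition strictly smaller than $u$; these I would handle by induction on $\ell(u)$, or equivalently by passing to the appropriate lower-dimensional Weyl chamber where the same Jacobian criterion applies.

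The main obstacle is the global injectivity step. The Jacobian computation secures only local injectivity on the open chamber, and promoting this to a worldwide statement via properness, contractibility, and covering-space reasoning—while also extending the conclusion consistently across the boundary strata of $K_l$ where the Jacobian degenerates—is where the real bookkeeping lies.
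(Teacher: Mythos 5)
Your properness argument is sound and is essentially the paper's: a bound on the image coordinates together with the fixed first $s$ coefficients bounds $p_2(a_u)$ via Newton's identities, so preimages of closed balls are compact and the map is closed onto a closed image (the $l=1$, $s=0$ case being trivial exactly as you say). The gap is in injectivity. The principle you invoke --- that a proper map from a simply connected (or contractible) domain which is a local diffeomorphism on a dense open subset has singleton fibres --- is false as stated: $x\mapsto x^2$ on $\mathbb{R}$ is proper, the domain is contractible, and the map is a local diffeomorphism off the single point $0$, yet it is two-to-one. So the covering-space/monodromy step does not go through without substantial extra input, and your own closing paragraph concedes that this is where the real work would lie; moreover, the density of the open chamber $\{a_1<\dots<a_l\}$ inside $V_s^u(f)\cap K_l$ that you would need is essentially Theorem \ref{thm:relint}/Corollary \ref{cor:closure}, which come \emph{after} this proposition and depend on it, so leaning on it risks circularity.

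The paper sidesteps all of this: injectivity of $P^{d-l}$ on $H_s^u(f)$ is an immediate consequence of Lemma \ref{lem:point}. If two polynomials $h,h'\in H_s^u(f)$ have the same first $l$ coefficients, then $h'\in H_l^u(h)$; since $\ell(u)=l\leq l$, Lemma \ref{lem:point} says $H_l^u(h)$ contains at most one polynomial, so $h=h'$. (Lemma \ref{lem:point} itself already packages the Jacobian/inverse-function-theorem computation together with Kostov's contractibility result, which is precisely the global rigidity you were trying to reconstruct by hand.) If you replace your monodromy step by this one-line appeal to Lemma \ref{lem:point}, the rest of your argument matches the paper's proof.
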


\begin{proof}
Firstly we consider the case when $l=1$, thus $u=(d)$ and $s=0$. So for any $a\in \mathbb{R}$ we have that $(t-a)^d= t^d-dat^{d-1}+...+(-a)^d\in H_0^u(f).$ Thus $P^{d-1}((t-a)^d) = t^d-dat^{d-1}$ and so the map $$P^{d-l}\circ \Pi_u:\mathbb{R}\to \mathbb{R}$$ is essentially just mapping $a$ to $-da$. This is naturally a homeomorphism and since, by Lemma \ref{lem:closedViete}, $\Pi_u$ is a homeomorphism, then so is $P^{d-l}$. Lastly, since the image of $P^{d-l}$ is all of $\mathbb{R}$, the image is closed in $\mathbb{R}$.

Next suppose $l\geq 2$. By Lemma \ref{lem:point}, the polynomials of $H_s^u(h)$ are uniquely determined by their first $l$ coefficients, thus $P^{d-l}$ is bijection between $H_s^u(f)$ and $P^{d-l}(H_s^u(f))$. Also, the topology on $H_s^u(f)$ is the subspace topology of the product topology on $\mathbb{R}^{d-s}$ with respect to the projections on each coordinate. Thus, by definition, the map $P^{d-l}$ is continuous.

To see that the inverse is continuous and that $P^{d-l}(H_s^u(f))$ is closed we will show that the image of closed subsets of $H_s^u(f)$ are closed in $\mathbb{R}^{l-s}$. So let $S$ be a closed subset of $H_s^u(f)$, then $S$ is also a closed subset of $\mathbb{R}^{d-s}$ since, by Lemma \ref{lem:closedViete}, $H_s^u(f)$ is closed in $\mathbb{R}^{d-s}$.

Let $g$ be a point in the closure of $P^{d-l}(S)$. Then for any $\epsilon>0$, the closed ball $\overline{B}_\epsilon(g)$ meets $P^{d-l}(S)$ and so the inverse image $$(P^{d-l})^{-1}(\overline{B}_\epsilon(g)\cap P^{d-l}(S)) =(\overline{B}_\epsilon(g)\times \mathbb{R}^{d-l})\cap S$$
is nonempty. It is also closed since $\overline{B}_\epsilon(g)\times \mathbb{R}^{d-l}$ and $S$ are closed in $\mathbb{R}^{d-s}$.

Since $\Pi_u$ is a homeomorphism, then $$M=\Pi_u^{-1}((\overline{B}_\epsilon(g)\times \mathbb{R}^{d-l})\cap S)$$ is a closed subset of $V_s^u(f)\cap K_l$ and since $V_s^u(f)\cap K_l$ is closed in $\mathbb{R}^{l}$, then so is $M$. We also have that $$M\subseteq \Pi_u^{-1}(S)\cap \{x\in K_l| g_i-\epsilon \leq (-1)^ie_i(x_u)\leq g_i+\epsilon \ \forall \ i\in [l]\}.$$ So if $a\in M$, then $e_1(a_u)\leq g_1-\epsilon$ and $e_2(a_u)\geq g_2-\epsilon$ since $l\geq 2$. Thus, by Newton's identities, we have $$p_2(a_u) = e_1^2(a_u)-2e_2(a_u) \leq g_1^2-2g_1\epsilon+\epsilon^2-2g_2+2\epsilon$$ and so $M$ is bounded. Since $M$ is closed and bounded, it is compact.

Since $\Pi_u$ and $P^{d-l}$ are continuous, $P^{d-l}\circ \Pi_u$ is continuous. And since the continuous image of a compact set is compact, we have that $(P^{d-l}\circ \Pi_u)(M)$ is compact. Thus $$g\in (P^{d-l}\circ \Pi_u)(M) \subseteq P^{d-l}(S)$$ and so $P^{d-l}(S)$ is closed in $\mathbb{R}^{l-s}$. Therefore $P^{d-l}$ is a closed map and thus $P^{d-l}$ is a homeomorphism. Lastly, by setting $S=H_s^u(f)$, we see that $P^{d-l}(H_s^u(f))$ is closed in $\mathbb{R}^{l-s}$.
\end{proof}

We see from Lemma \ref{prop:derhomeom} that when $\ell(u)\geq s$, the largest dimension that $H_s^u(f)$ can have is $\ell(u)-s$. Thus we say that the \textbf{maximal dimension} of $H_s^u(f)$ is $\max\{\ell(u)-s,0\}$.

\begin{theorem}\label{thm:relint}
If $H_s^u(f)$ contains a polynomial with composition $u$, then $H_s^u(f)$ is maximal dimensional and its relative interior consists of the polynomials with composition $u$.
\end{theorem}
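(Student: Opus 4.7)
The plan is to translate the problem across the homeomorphism $\Pi_u\colon V_s^u(f)\cap K_l \to H_s^u(f)$ from Lemma \ref{lem:closedViete}; the case $\ell(u)\leq s$ is handled at once by Lemma \ref{lem:point}, so I assume $l:=\ell(u)>s$. If $h\in H_s^u(f)$ has preimage $a=(a_1,\ldots,a_l)$, then $v(h)=u$ precisely when $a$ lies in the open chamber $K_l^\circ:=\{x\in\mathbb{R}^l : x_1<\cdots<x_l\}$, and $v(h)\lneq u$ precisely when $a_j=a_{j+1}$ for some index $j$. It therefore suffices to show that $V_s^u(f)\cap K_l$ is a smooth $(l-s)$-dimensional manifold at every point of $V_s^u(f)\cap K_l^\circ$ and fails to be locally homeomorphic to $\mathbb{R}^{l-s}$ at every point of $V_s^u(f)\cap\partial K_l$.

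For the interior statement, I rewrite the defining equations of $V_s^u(f)$ as $p_i(x_u)=c_i$ for $i=1,\ldots,s$ via Newton's identities. At $a\in K_l^\circ$ the Jacobian $(i\,u_j\,a_j^{i-1})_{i,j}$ is a weighted Vandermonde matrix of full rank $s$, since $s\leq l$ and the $a_j$ are distinct. The implicit function theorem then realises $V_s^u(f)$ as a smooth $(l-s)$-dimensional submanifold of $\mathbb{R}^l$ near $a$, and the same description holds for $V_s^u(f)\cap K_l$ because $a$ lies in the open set $K_l^\circ$. Transporting along $\Pi_u$ shows that $H_s^u(f)$ is locally a smooth $(l-s)$-dimensional manifold at every polynomial with composition $u$. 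Combined with the dimension upper bound from Proposition \ref{prop:derhomeom}, this yields $\dim H_s^u(f)=l-s$, proving maximal dimensionality, and places every polynomial with composition $u$ in the relative interior.

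For the boundary statement, suppose $a\in V_s^u(f)\cap\partial K_l$ with $a_j=a_{j+1}$ for some $j$. The key computation is that the vector $e_{j+1}-e_j$ is not in the row span of the Jacobian at $a$: if one had $e_{j+1}-e_j=\sum_{i=1}^s c_i(\mathrm{row}\,i)$ and set $g(t):=\sum_i c_i\,i\,t^{i-1}$, then inspecting the $j$-th and $(j+1)$-st columns would force $u_j g(a_j)=-1$ and $u_{j+1}g(a_{j+1})=+1$, contradicting $a_j=a_{j+1}$ together with $u_j,u_{j+1}>0$. When $\ell(v(h))\geq s$ the Jacobian retains full rank $s$, so this obstruction makes $V_s^u(f)$ cross the wall $\{x_j=x_{j+1}\}$ transversally at $a$; hence $V_s^u(f)\cap K_l$ is locally a manifold with boundary (or corners, if several coordinates coincide) of dimension $l-s$ having $a$ on its boundary, and $h$ lies in the relative boundary of $H_s^u(f)$.

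The principal technical obstacle is the regime $\ell(v(h))<s$, where the Jacobian at $a$ drops rank and $V_s^u(f)$ is generally singular at $a$. In this case Lemma \ref{lem:point} forces $H_s^{v(h)}(f)=\{h\}$, pinning $h$ as an isolated point of its own stratum. I would complete the argument by analysing the tangent cone of $V_s^u(f)$ at $a$ and exploiting the row-span obstruction above, which still excludes tangent directions preserving $x_j=x_{j+1}$, to show that $V_s^u(f)\cap K_l$ cannot be locally Euclidean of dimension $l-s$ at $a$.
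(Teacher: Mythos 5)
Your first half is fine and in fact takes a different (and clean) route from the paper: at a preimage point $a$ with distinct coordinates the weighted Vandermonde Jacobian of $(p_1(x_u),\dots,p_s(x_u))$ has rank $s$, so the implicit function theorem plus the homeomorphisms of Lemma \ref{lem:closedViete} and Proposition \ref{prop:derhomeom} give $\dim H_s^u(f)=\ell(u)-s$ and put every composition-$u$ polynomial in the relative interior; the paper instead deduces this from the $s=0$ picture ($P^{d-l}(H_0^u(f))\cong K_l$) sliced by the hyperplanes $A^l_{f_i}$. Your row-span computation showing $e_{j+1}-e_j\notin\operatorname{span}$ of the Jacobian rows is also correct, and when $h$ has at least $s$ distinct roots it does yield (after an invariance-of-domain step you leave implicit) that $h$ is not locally Euclidean of dimension $l-s$, hence not in the relative interior.

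The genuine gap is the regime $\ell(v(h))<s$, which you acknowledge but do not prove: "I would complete the argument by analysing the tangent cone" is a plan, not an argument, and the plan is doubtful. At such a point the Jacobian is rank-deficient, and the tangent cone of the real variety $V_s^u(f)$ at a singular point is not controlled by the row span of the Jacobian there: the fact that $e_{j+1}-e_j$ is not a combination of the gradients says nothing, on its own, about which directions occur in the tangent cone of $V_s^u(f)\cap K_l$ or about its local topology, so the claimed exclusion of directions with $x_j=x_{j+1}$ does not follow. This is precisely the hard case: the local structure of Vandermonde varieties at points with few distinct coordinates is the subtle content that the paper avoids analysing locally. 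Instead the paper argues globally: a putative relative-interior point $g$ with $v(g)<u$ projects to a boundary point of $P^{d-l}(H_0^u(f))$; by Lemma \ref{lem:point} one finds two composition-$u$ polynomials $p_\pm$ on the one-dimensional slice through $g$, and a nearby exterior point $q$ would then disconnect the slice $P^{d-l}(H_0^u(f))\cap A^l_{q_1}\cap\dots\cap A^l_{q_{l-1}}$, contradicting the contractibility of Vandermonde variety sections (Proposition \ref{prop:contractible}, i.e.\ Kostov's theorem). Without either carrying out a genuine local analysis at the singular points or importing a global input of this kind, your proposal does not establish that the relative interior contains no polynomial with composition strictly below $u$.
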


\begin{proof}
If $s=0$ and $l=\ell(u)$, the map $P^{d-l}\circ \Pi_u:K_l\to P^{d-l}(H_0^u(f))$ is a homeomorphism by Lemma \ref{lem:closedViete} and Lemma \ref{prop:derhomeom}. So since the dimension of $K_l$ is $l$ and its interior are the points with no repeated coordinates, then the dimension of $P^{d-l}(H_0^u(f))\subseteq \mathbb{R}^l$ is $l$ and its interior are the polynomials with composition $u$.

Next suppose $s>0$ and let $A^r_{f_i}$ denote the affine hyperplane of $\mathbb{R}^r$ defined by fixing the $i^{th}$ coordinate to be equal to $f_i$. Then $$P^{d-l}(H_s^u(f)) = P^{d-l}(H_0^u(f)\cap A^d_{f_1}\cap ... \cap A^d_{f_s})=$$
$$P^{d-l}(H_0^u(f))\cap A^l_{f_1}\cap ... \cap A^l_{f_s}.$$
Thus if there is a polynomial $h\in H_s^u(f)$ with composition $u$, then $P^{d-l}(h)$ lies in the interior of $P^{d-l}(H_0^u(f))$ and therefore also in the interior of $P^{d-l}(H_s^u(f))$ and $P^{d-l}(H_s^u(f))$ must be of dimension $\max\{l-s,0\}$. Since $P^{l-s}$ is a homeomorphism, $H_s^u(f)$ is maximal dimensional and $h$ is in its relative interior.

For the reverse inclusion, suppose $l>s$ so that $H_s^u(f)$ is at least one dimensional. If its relative interior contains a polynomial $g$ with $v(g)<u$, then $P^{d-l}(g)$ lies in the interior of the $(l-s)$-dimensional set $P^{d-l}(H_s^u(f))\subseteq\mathbb{R}^{l-s}$. Thus $P^{d-l}(g)$ lies in the interior of the one dimensional set $P^{d-l}(H_{l-1}^u(g))$.

By Lemma \ref{lem:point}, there are finitely many polynomials in $H_{l-1}^u(g)$ with a smaller composition than $u$. Thus there are two polynomials $p_-$ and $p_+$ in $H^u_{l-1}(g)$, with composition $u$, and a $\delta>0$ such that $$P^{d-l}(p_{-})=P^{d-l}(g)-\delta \text{ and } P^{d-l}(p_{+})=P^{d-l}(g)+\delta.$$

Since $P^{d-l}(p_{-})$ and $P^{d-l}(p_{+})$ are interior points of $P^{d-l}(H^u_{0}(f))$, there is an $\epsilon>0$ such that $B_{\epsilon}(P^{d-l}(p_{-}))$ and $B_{\epsilon}(P^{d-l}(p_{+}))$ are contained in the interior of $P^{d-l}(H^u_{0}(f))$. And since $P^{d-l}(g)$ is in the boundary of $P^{d-l}(H^u_{0}(f))$, the ball $B_{\epsilon}(P^{d-l}(g))$ must contain a polynomial $q = t^{d}+q_1t^{d-1}+...+q_lt^{d-l}$ that is not in $P^{d-l}(H^u_{0}(f))$.

Thus $A^l_{q_{1}}\cap ...\cap A^l_{q_{l-1}}$ is a line that passes through $q$ and the two balls $B_{\epsilon}(P^{d-l}(p_{-}))$ and $B_{\epsilon}(P^{d-l}(p_{+}))$. But if $q$ separates the nonempty sets $$B_{\epsilon}(P^{d-l}(p_{-}))\cap A^l_{q_{1}}\cap ...\cap A^l_{q_{l-1}}$$ and $$B_{\epsilon}(P^{d-l}(p_{+}))\cap A^l_{q_{1}}\cap ...\cap A^l_{q_{l-1}},$$ then $$P^{d-l}(H^u_{0}(f))\cap A^l_{q_{1}}\cap ...\cap A^l_{q_{l-1}}=P^{d-l}(H^u_{l-q1}(p_{+}))\subset \mathbb{R}$$ is nonempty but not contractible. This contradicts Proposition \ref{prop:contractible} and $g$ can therefore not be in the relative interior of $H_s^u(f)$.
\end{proof}

Thus, if $\ell(u)>s$, the stratum $H_s^u(f)$ is of maximal dimension if and only if it contains a polynomial with composition $u$. We can use this observation to determine all the possibilities for the dimension of $H_s^u(f)$. It is worth mentioning that a similar observation can be found in \cite{basu2022vandermonde}, Proposition 5.

\begin{theorem}\label{thm:consequences}
If $\ell(u)>s$ and $H_s^u(f)$ contains a polynomial with at least $s$ distinct roots, then $H_s^u(f)$ is maximal dimensional. If not, then $H_s^u(f)$ is either empty or a single polynomial.
\end{theorem}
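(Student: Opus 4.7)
The plan is to split into two cases. The harder case is when $H_s^u(f)$ contains a polynomial $h$ with composition $v := v(h)$ satisfying $\ell(v) \geq s$; here I will construct a polynomial of composition exactly $u$ in $H_s^u(f)$ and then invoke Theorem~\ref{thm:relint}. Let $a_1 < \cdots < a_{\ell(v)}$ be the distinct roots of $h$. Since $v \leq u$, the index set $[\ell(u)]$ partitions into consecutive ``clusters'' $C_1, \dots, C_{\ell(v)}$ with $\sum_{j \in C_i} u_j = v_i$; define $\tilde b \in K_{\ell(u)}$ by $\tilde b_j := a_i$ for $j \in C_i$, so that $\tilde b \in V_s^u(f) \cap K_{\ell(u)}$.

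The $(r,j)$ entry of the Jacobian of $\Phi(y) = (p_1(y_u), \ldots, p_s(y_u))$ at $\tilde b$ is $r\, u_j\, a_{k(j)}^{r-1}$, where $k(j) = i$ when $j \in C_i$, and the columns coming from a single cluster are all proportional. Hence the rank of the Jacobian equals the rank of the $s \times \ell(v)$ Vandermonde-type matrix $(a_i^{r-1})_{r,i}$ in the distinct roots, which is $s$ because $\ell(v) \geq s$. By the implicit function theorem, $V_s^u(f)$ is a smooth $(\ell(u)-s)$-manifold near $\tilde b$. A perturbation $w$ supported on a single cluster $C_i$ lies in the tangent space iff $\sum_{j \in C_i} u_j w_j = 0$ (the higher-order rows give the same constraint up to a nonzero scalar, or $0=0$ when $a_i = 0$), leaving a $(|C_i|-1)$-dimensional space; within it I select $w_j$ strictly increasing in $j$ (for example $w_j = j - \bar j$ with $\bar j$ the $u$-weighted mean of the indices in $C_i$). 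Summing these cluster-wise perturbations produces a tangent direction along which the entries of $\tilde b$ separate strictly within every cluster. Following a short arc of the local manifold along this direction lands, by continuity, in the interior of $K_{\ell(u)}$, giving a polynomial of composition exactly $u$ in $H_s^u(f)$, and Theorem~\ref{thm:relint} then delivers maximality.

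For the remaining case, suppose every polynomial in $H_s^u(f)$ has fewer than $s$ distinct roots. Then each such $h$ has $\ell(v(h)) < s$, and Lemma~\ref{lem:point} forces $H_s^{v(h)}(f) = \{h\}$, so every polynomial in $H_s^u(f)$ is uniquely determined by its composition; since only finitely many compositions are $\leq u$, the set $H_s^u(f)$ is finite. Proposition~\ref{prop:contractible} forces this finite set to be either empty or a single point. The main obstacle is the local analysis in Case (A), but both the rank computation and the construction of the splitting tangent vectors reduce cleanly to the Vandermonde structure of the Jacobian.
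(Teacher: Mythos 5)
Your proposal is correct, but it takes a genuinely different route from the paper's in the main case. The paper splits one repeated root at a time: given $h\in H_s^u(f)$ with $k\geq s$ distinct roots and a composition $v\leq u$ covering $v(h)$, it passes to $H_k^v(h)$ (fixing all first $k$ coefficients of $h$), uses the nonvanishing leftmost $k\times k$ Vandermonde minor together with Proposition 3.3.10 of \cite{bochnak2013real} to obtain a one-dimensional nonsingular branch of $V_k^v(h)$ through the lifted point, and then invokes Lemma \ref{lem:point} to force this branch off the hyperplane $x_k=x_{k+1}$, producing a polynomial of composition $v$; iterating gives composition $u$, and Theorem \ref{thm:relint} finishes. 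You instead work once and for all in $V_s^u(f)$ at the lifted point $\tilde b$: since $\tilde b$ has at least $s$ distinct coordinate values, the first $s$ power sums form a submersion there (your rank argument via positive column scalings and the diagonal rescaling of the $s\times\ell(v)$ Vandermonde matrix is correct), and your cluster-wise kernel vectors (tangency of a $C_i$-supported $w$ is exactly $\sum_{j\in C_i}u_jw_j=0$, including the $a_i=0$ case) give an explicit tangent direction splitting every repeated coordinate simultaneously, so a short arc of the fibre has strictly increasing coordinates and maps under $\Pi_u$ to a composition-$u$ polynomial in $H_s^u(f)$. This avoids the induction, the appeal to the local structure of one-dimensional real algebraic sets, and the paper's separate treatment of $s\leq 1$; the paper's route, in exchange, only uses tools it needs elsewhere and the uniqueness statement of Lemma \ref{lem:point}. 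Two small points you should make explicit: the arc argument needs a curve in the fibre with velocity $w$ (immediate from a submersion chart, with the inter-cluster gaps preserved by continuity and the intra-cluster gaps opened at first order); and in the boundary situation $\ell(u)\leq s$ with a polynomial having $s$ distinct roots (so $\ell(u)=s$ and $v(h)=u$), which your Case B does not cover, your Case A conclusion ``maximal dimensional'' (here $0$-dimensional) should be upgraded to the theorem's ``a single polynomial'' by one more application of Lemma \ref{lem:point} or Proposition \ref{prop:contractible}.
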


\begin{proof}
If $s=0$, then any composition occurs and thus by Theorem \ref{def:relintbd}, any stratum is maximal dimensional. Similarly, if $s=1$, then for any composition $u$, the polynomial $-e_1(x_u)-f_1$ has a real zero with $l=\ell(u)$ distinct coordinates ordered increasingly. To see this pick $l$ real numbers $a_1,...,a_l$ such that $a_1/u_1<....<a_l/u_l$ and let
$$a=-\frac{f_1}{\sum_i a_i}\bigg{(}\frac{a_1}{u_1},...,\frac{a_l}{u_l}\bigg{)},$$
then $$-e_1(a_u) = \sum_j u_j\frac{f_1}{\sum_i a_i}\frac{a_j}{u_j} = \frac{f_1}{\sum_i a_i}\sum_ja_j = f_1.$$
Thus the composition $u$ occurs and $H_s^u(f)$ is of maximal dimension.

Next we suppose $s\geq 2$. If $\ell(u)\leq s$ or $H_s^u(f)$ does not contain a polynomial with at least $s$ distinct roots, then $H_s^u(f) = \cup_{w\leq u|\ell(w)=s-1}H_s^w(f)$. By Lemma \ref{lem:point}, $H_s^w(f)$ contains at most one point when $\ell(w)= s-1$. Since there are finitely many compositions of length $s-1$, then $H_s^u(f)$ contains finitely many polynomials and since $H_s^u(f)$ is contractible it contains at most one polynomial.

So suppose $h\in H_s^u(f)$ has $k\geq s$ distinct roots and $v(h)< u$. Let $v\leq u$ be a composition that covers $v(h)$. Then $\ell(v) = k+1$ and so by Proposition \ref{prop:derhomeom}, $H_k^v(h)$ is at most one dimensional. Since $v(h)<v$ we can write $h$ as $\prod_{i=1}^{k+1} (t-a_i)^{v_i}$ and without loss of generality we may assume that $a_1<...<a_k=a_{k+1}$.

As in the proof of Lemma \ref{lem:point}, if we define $V_k^v(f)$ as $\{x\in \mathbb{R}^{k+1}| p_1(x_u) = c_1,...,p_k(x_u) = c_k\}$, then the Jacobian matrix of the defining polynomials is $(iv_jx_j^{i-1})_{i\leq k,j\leq k+1}$ and the determinant of the leftmost $k\times k$ submatrix is $$\prod_{i=1}^k iv_i \prod_{1\leq j< r\leq k} (x_j-x_r).$$

Since the first $k$ coordinates of $a=(a_1,...,a_{k+1})$ are distinct, the determinant does not vanish at $a\in V_k^v(h)$. So by Proposition 3.3.10 in \cite{bochnak2013real}, $a$ is a nonsingular point of a one dimensional irreducible component, $V$, of $V_k^v(h)$. Thus $a$ lies in an open neighbourhood $U$ of $V$ where $U$ is a one dimensional manifold.

By Lemma \ref{lem:point}, the a one dimensional manifold, $U$, only intersects the hyperplane $H=\{x\in \mathbb{R}^{k+1}|x_k=x_{k+1}\}$ once. So $U$ must meet the open halfspace $H^{+}:=\{x\in \mathbb{R}^{k+1}|x_k<x_{k+1}\}$ and thus there is a point in $V_k^v(h)$ with no repeated coordinates. So $H_k^v(h)\subseteq H_s^u(f)$ contains a polynomial, $g$, with composition $v$.

We can apply the same argument to the polynomial $g$ if $v<u$, and keep doing this inductively until we find a polynomial with composition $u$. Then by Theorem \ref{thm:relint}, $H_s^u(f)$ is maximal dimensional.
\end{proof}

\begin{corollary}\label{cor:closure}
Any stratum equals the closure of its relative interior.
\end{corollary}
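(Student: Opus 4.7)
The plan is to reduce to the non-trivial case and argue by contradiction. If $H_s^u(f)$ is empty or a single polynomial, the statement is immediate. Otherwise Theorem \ref{thm:consequences} makes $H_s^u(f)$ maximal-dimensional of dimension $\ell(u) - s \geq 1$, Theorem \ref{thm:relint} identifies the relative interior with $\{h \in H_s^u(f) : v(h) = u\}$, and Lemma \ref{lem:closedViete} shows $H_s^u(f)$ is closed, so the closure of the relative interior is already contained in $H_s^u(f)$. It remains to verify that every $h \in H_s^u(f)$ with $v(h) < u$ is a limit of polynomials of composition $u$.

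I would suppose for contradiction that some $\epsilon > 0$ satisfies $B_\epsilon(h) \cap \{g \in H_s^u(f) : v(g) = u\} = \emptyset$. Since $H_s^u(f)$ is contractible by Proposition \ref{prop:contractible} and of positive dimension, it is connected and not a singleton, so $h$ is not isolated and I may pick a sequence $h_n \to h$ in $H_s^u(f) \setminus \{h\}$. Only finitely many compositions are $\leq u$, so by pigeonhole I pass to a subsequence on which $v(h_n) = w^*$ is constant. The choice of $\epsilon$ forbids $w^* = u$, hence $w^* < u$.

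I then split on $\ell(w^*)$. If $\ell(w^*) < s$, Theorem \ref{thm:consequences} forces $H_s^{w^*}(f)$ to contain at most one polynomial, so the sequence $(h_n)$ is constant and must equal $h$, contradicting $h_n \neq h$. If $\ell(w^*) \geq s$, I invoke the constructive step inside the proof of Theorem \ref{thm:consequences}: because $h_n$ has at least $s$ distinct roots, the Vandermonde-type minor of the Jacobian of the first $\ell(w^*)$ power sums is nonzero at the corresponding point in $K_{\ell(w^*)}$, so by the inverse function theorem the relevant Vandermonde variety contains a smooth one-dimensional arc through $h_n$ that extends into the open halfspace where two coordinates split. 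This supplies polynomials of composition $v$, for any cover $v \leq u$ of $w^*$, arbitrarily close to $h_n$; chaining cover-by-cover up to $u$ through at most $\ell(u) - \ell(w^*)$ steps and passing to a diagonal subsequence produces $g_n \to h$ with $v(g_n) = u$, contradicting the choice of $\epsilon$. The main obstacle is handling $h$ with $\ell(v(h)) < s$, where the Vandermonde machinery of Theorem \ref{thm:consequences} does not apply to $h$ directly; the contradiction scheme sidesteps this by shifting attention to the approximating sequence $h_n$, whose composition either collapses the stratum to $\{h\}$ or already provides enough distinct roots for the argument.
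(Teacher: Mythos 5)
The load-bearing step of your argument is the claim that the inverse-function-theorem construction from the proof of Theorem \ref{thm:consequences} yields polynomials of composition $v$ \emph{arbitrarily close} to $h_n$. That is not what that construction gives, and it is precisely the nontrivial content of the corollary. At the point $a\in V_k^v(h_n)$ corresponding to $h_n$ (with the repeated coordinate at the split positions $j,j+1$), the IFT only tells you that $V_k^v(h_n)$ is locally a smooth one-dimensional arc through $a$. It does not tell you that this arc enters the open halfspace $\{x_j<x_{j+1}\}$ near $a$: a priori the arc could be tangent to the wall $\{x_j=x_{j+1}\}$ and, off $a$, lie locally in $\{x_j>x_{j+1}\}$, whose points after sorting give the \emph{swapped} composition $(v_1,\dots,v_{j+1},v_j,\dots,v_{k+1})$, which need not be $\leq u$ and hence need not lie in the stratum at all. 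The paper's proof of Theorem \ref{thm:consequences} rules this out only globally: using Lemma \ref{lem:point} it shows the one-dimensional neighbourhood $U$ meets the wall just once and hence meets the good halfspace \emph{somewhere} in $U$, producing a polynomial of composition $v$ in $H_k^v(h_n)$ — with no control on its distance to $h_n$. So the citation does not deliver the locality you need, and without it the chaining and diagonal argument have nothing to chain.

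The gap is repairable, but it requires an additional argument that you have not supplied. One option is a transversality computation: the kernel of the Jacobian $(iv_mx_m^{i-1})_{i\leq k,\,m\leq k+1}$ at $a$ consists of vectors $t$ with $t_m=0$ for $m\neq j,j+1$ and $v_jt_j=-v_{j+1}t_{j+1}\neq 0$, so the arc in fact crosses $\{x_j=x_{j+1}\}$ transversally and does produce composition-$v$ points arbitrarily close to $h_n$; this is a genuine addition, not contained in the cited step. The other option is the paper's route, which avoids local analysis of arcs altogether: it first proves the one-dimensional case of the corollary directly (a one-dimensional stratum is connected by Proposition \ref{prop:contractible} and has only finitely many points of non-maximal composition by Lemma \ref{lem:point}, so every point is a limit of relative interior points), and then inducts on dimension by using connectedness plus Lemma \ref{lem:point} to find, in any ball about $h$, a polynomial $g$ with at least $s+1$ distinct roots, and applying the inductive hypothesis to the lower-dimensional stratum $H_{s+1}^u(g)$. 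Your scheme of reducing to covers $w^*\lessdot v\lessdot\cdots\lessdot u$ is essentially a reorganization of that induction, but as written its key step is asserted rather than proved. (The two trivial branches of your argument — the reduction to the maximal-dimensional case and the case $\ell(w^*)<s$ — are fine.)
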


\begin{proof}
By Theorem \ref{thm:consequences} we may suppose $H_s^u(f)$ is maximal dimensional and at least one dimensional. We will prove the statement by induction on the dimension of the strata. If $H_s^u(f)$ is one dimensional, then it is connected by Proposition \ref{prop:contractible}. Thus it contains at most two relative boundary points and any open ball about a point of $H_s^u(f)$ must contain infinitely many points from the relative interior. Thus $H_s^u(f)$ is the closure of its relative interior.

Now suppose the statement is true for all $(n-1)$-dimensional strata, where $n-1\geq 1$. Suppose $H_s^u(f)$ is $n$-dimensional and $h\in H_s^u(f)$. By Proposition \ref{prop:contractible}, $H_s^u(f)$ is connected, so for any $\epsilon>0$, $B_\epsilon(h)$ must contain infinitely many polynomials from $H_s^u(f)$. Since there are finitely many polynomials in $H_s^u(f)$ with at most $s$ distinct roots, there is a $g\in B_\epsilon(h)\cap H_s^u(f)$ with at least $s+1$ distinct roots.

Then by Theorem \ref{thm:consequences}, $H_{s+1}^u(g)$ is $(n-1)$-dimensional and by the inductive hypothesis, $g$ is in the closure of its relative interior. So by Theorem \ref{thm:relint}, any open ball about $g$ contains a polynomial with composition $u$. Thus $B_\epsilon(h)$ also contains a polynomial with composition $u$ and so $h$ is in the closure of the relative interior of $H_s^u(f)$.
\end{proof}

\section{Combinatorial structure}\label{section:comb}

In this section we explore the combinatorial structure of the lattice of strata and show that this lattice is graded, atomic and coatomic. Then we use this to determine which compositions actually occur in a given starved polytope. Due to Theorem \ref{thm:consequences} we may assume $H_s(f)$ is of dimension $d-s>0$ for this section since the one-point lattice trivially satisfies the main combinatorial properties that follow. We begin with the notion of a graded poset.

\begin{definition}
A totally ordered subset of a poset is a \textbf{chain} and if a chain is maximal with respect to inclusion it is a \textbf{maximal chain}. A poset in which every maximal chain has the same length is called \textbf{graded}.
\end{definition}

To see why we call such a poset graded: let $y_0<...<y_l$ and $z_0<...<z_l$ be two maximal chains of a graded poset $L$ where $y_i=z_j$, for some $i$ and $j$. Then we have $i=j$, otherwise $y_0<y_1<...<y_i=z_j<z_{j+1}<...<z_l$ is a maximal chain which is not of length $l+1$ contradicting the gradedness of $L$. Thus the \textbf{rank} of $y_i$, $\rank(y_i):=i$, is well defined and we can write the poset as the disjoint union $L=\sqcup_{k\geq 0} L(k)$, where the elements of $L(k)$ are the rank $k$ elements.

\begin{lemma}\label{lem:comp}
If $s\geq 2$ and $H^u_s(f)$ is at least one dimensional, it is compact and its lattice of strata contains the empty stratum.
\end{lemma}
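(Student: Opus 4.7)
For compactness, the set $H_s^u(f)$ is already closed in $\mathbb{R}^{d-s}$ by Lemma \ref{lem:closedViete}, so only boundedness needs attention. The plan is to exploit the hypothesis $s\geq 2$: the first two nontrivial elementary symmetric polynomials of the roots of any $h\in H_s^u(f)$ agree with those of $f$, and Newton's identity converts this into the fixed value $\sum_i a_i^2 = p_2(a) = f_1^2 - 2f_2$ of the second power sum. This uniformly bounds each root by $\sqrt{f_1^2-2f_2}$, and hence uniformly bounds every elementary symmetric polynomial in the roots, i.e.\ every coefficient of $h$.

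For the second half, the natural candidate empty stratum is $H_s^{(d)}(f)$, where $(d)$ is the minimum element of the composition lattice and therefore satisfies $(d)\leq u$ automatically, so $H_s^{(d)}(f)$ indeed appears in the lattice of strata of $H_s^u(f)$. Any polynomial in $H_s^{(d)}(f)$ has the form $(t-a)^d$; matching the fixed coefficient $f_1$ pins down $a=-f_1/d$, and matching $f_2 = \binom{d}{2}a^2$ then yields the single algebraic constraint $(d-1)f_1^2 = 2df_2$. Translating this back to the roots $a_1,\dots,a_d$ of $f$ via $f_1=-\sum a_i$ and $f_1^2-2f_2=\sum a_i^2$, the constraint becomes the Cauchy--Schwarz equality $\bigl(\sum a_i\bigr)^2 = d\sum a_i^2$, or equivalently $\sum_{i<j}(a_i-a_j)^2=0$, which holds if and only if all roots of $f$ coincide, i.e.\ $f=(t-c)^d$ for some $c\in\mathbb{R}$.

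It then remains to rule out $f=(t-c)^d$. In that case the boundedness computation of the first paragraph specialises to $\sum b_i = dc$ and $\sum b_i^2 = dc^2$ for the roots of any $h\in H_s(f)$, and the same Cauchy--Schwarz rigidity forces $b_1=\cdots=b_d=c$; so $H_s(f)=\{f\}$ would be a single point, contradicting the assumption that $H_s^u(f)\subseteq H_s(f)$ is at least one-dimensional. Hence $f$ is not a $d$-th power, $H_s^{(d)}(f)$ is empty, and the empty stratum does occur in the lattice. The only mildly delicate step is recognising that both halves of the lemma are driven by the same phenomenon: for $s\geq 2$ the second power sum of the roots is pinned, which on one hand confines $H_s^u(f)$ to a bounded region and on the other activates the Cauchy--Schwarz rigidity needed to collapse $H_s^{(d)}(f)$; once this observation is made, the rest is bookkeeping.
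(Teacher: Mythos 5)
Your proof is correct and follows essentially the same route as the paper: compactness comes from the fixed second power sum $p_2=f_1^2-2f_2$ (the paper cites \cite{riener2012degree} and the sphere it defines, you make the bound explicit), and the empty stratum is obtained by showing that a point of $H_s^{(d)}(f)$ would force $H_2(f)$, hence $H_s(f)$, to be a single point, contradicting the dimension hypothesis. Your Cauchy--Schwarz equality argument is just an algebraic rephrasing of the paper's observation that the hyperplane $p_1=c_1$ can only meet the sphere $p_2=c_2$ at the diagonal point tangentially, so the two proofs coincide in substance.
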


\begin{proof}
As observed in \cite{riener2012degree} Proposition 4.1, a stratum of $H_s(f)$ is compact when $s\geq 2$ since we can rewrite the $s$ first elementary symmetric polynomials in terms of the $s$ first power sums and the second power sum defines a sphere.

For the second statement, note that when $s=2$, the equations $$p_1(x) = c_1 \text{ and } p_2(x) = c_2$$ defines a nonempty intersection of a hyperplane, whose normal vector is $(1,1,...,1)$, and a $(d-1)$-sphere. This intersection can either be a $(d-2)$-sphere in the hyperplane or it can be one of the two points $\pm\sqrt{\frac{c_2}{d}}(1,1,...,1)$.

That is, if $v=(d)$, and the stratum $H_s^v(f)$ is nonempty, then there can be no other point in $H_s(f)$ since there is no other point in $H_2(f)$. So since $H_s(f)$ is at least one dimensional we have $H_s^v(f)=\emptyset$ and since $v$ is smaller than any composition $u$, then $H^u(f)$ contains the empty stratum.
\end{proof}

For $s=0$ and $s=1$, the starved polytopes will have all the main combinatorial properties we are establishing. But the argument is different from the other cases, so we will restrict $s$ to be at least $2$ for now. This allows us to use Lemma \ref{lem:comp}, which will be very helpful. Before we proceed note that we use the convention that the empty set has dimension $-1$.

\begin{proposition}\label{prop:graded}
If $s\geq 2$, the lattice of strata of $H_s(f)$ is graded and the rank of a stratum is one more than its dimension.
\end{proposition}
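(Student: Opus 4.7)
The plan is to verify that $\rank(X) := \dim(X) + 1$ (with the convention $\dim(\emptyset) = -1$) is a rank function on the lattice of strata. Since the top element is $H_s(f)$ of dimension $d-s$ and the bottom element is the empty stratum (provided by Lemma \ref{lem:comp}), this will force every maximal chain to have the same length, giving gradedness. Concretely, I would prove the contrapositive of the covering condition: whenever $A \subsetneq B$ are strata with $\dim B - \dim A \geq 2$, I would produce a stratum $C$ with $A \subsetneq C \subsetneq B$.

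I would split the argument into cases based on $\dim A$. For $A = \emptyset$ and $\dim B \geq 1$, Lemma \ref{lem:comp} makes $B$ compact, so $B$ has a nonempty relative boundary; any boundary polynomial $g$ has composition strictly less than the minimal label of $B$, and $C := H_s^{v(g)}(f)$ gives the desired nonempty proper sub-stratum. For $\dim A \geq 1$, I would write $A = H_s^v(f)$ and $B = H_s^u(f)$ with minimal labels $v < u$, pick any $x \in S(u) \setminus S(v)$ (nonempty because $v < u$), and let $w$ be the composition with $S(w) = S(u) \setminus \{x\}$. Then $v \leq w \lessdot u$, so $A \subseteq H_s^w(f) \subsetneq B$; since $\dim H_s^w(f) \geq \dim A \geq 1$, Theorem \ref{thm:consequences} promotes $H_s^w(f)$ to maximal dimension $\ell(w) - s = \dim B - 1$, providing the intermediate stratum.

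The delicate case, and the main obstacle, is $A = \{h\}$ with $\dim B \geq 2$. Here the combinatorial construction above can degenerate: each codim-one stratum $H_s^{w_x}(f)$ with $v(h) \leq w_x$ could individually collapse to $\{h\}$, so no single combinatorial choice of $w$ is guaranteed to work. I would overcome this with a local dimension argument. Via the homeomorphism of Proposition \ref{prop:derhomeom}, I would identify $B$ with a compact full-dimensional set $\bar B \subset \mathbb{R}^{m}$ (where $m = \dim B$), which by Corollary \ref{cor:closure} equals the closure of its interior. For the image $\bar h$ of $h$ and any $\epsilon > 0$, the open ball $B_\epsilon(\bar h)$ meets both the interior of $\bar B$ and its complement in disjoint nonempty open sets, so $B_\epsilon(\bar h) \cap \partial \bar B$ cannot have dimension less than $m-1$ without contradicting the connectedness of the ball. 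Since $\partial B$ is the finite union $\bigcup_{w < u} H_s^w(f)$ of closed strata of dimension at most $m-1$, I would apply pigeonhole to a shrinking sequence of neighborhoods of $h$ to extract a single stratum $C$ of dimension $m-1$ meeting every such neighborhood; the closedness of $C$ then forces $h \in C$, making $C$ strictly intermediate.

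Once the three cases are handled, the dimension drops by exactly one along every covering relation, which yields $\rank(X) = \dim(X) + 1$ and completes the proof. The hardest ingredient is the local-dimension claim used in the singleton case, since it is the one place where purely combinatorial reasoning on compositions breaks down and must be replaced by topological input drawn from Corollary \ref{cor:closure}.
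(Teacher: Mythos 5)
Your proposal is correct in outline and shares the paper's skeleton: reduce gradedness to showing that whenever $A\subsetneq B$ are strata with $\dim B-\dim A\geq 2$ there is an intermediate stratum, with the bottom element supplied by Lemma \ref{lem:comp}; your empty case and your $\dim A\geq 1$ case (pick $x\in S(u)\setminus S(v)$, pass to the cover $w\lessdot u$ with $v\leq w$, and let Theorem \ref{thm:consequences} force $\dim H_s^w(f)=\ell(u)-1-s$) are essentially the paper's first and third cases. Where you genuinely diverge is the singleton case $A=\{h\}$. The paper handles it globally: by Lemma \ref{lem:comp} and Proposition \ref{prop:contractible} the ambient stratum $B$ is compact and contractible, so its relative boundary is at least one dimensional and connected, and a zero-dimensional $\{h\}$ cannot be an isolated piece of it; hence $h$ lies in a larger proper stratum. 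You instead argue locally at $h$: identify $B$ with a full-dimensional closed set via Proposition \ref{prop:derhomeom}, use Corollary \ref{cor:closure} and Theorem \ref{thm:relint} to see that every small ball about $\bar h$ meets both $\mathrm{int}(\bar B)$ and its complement, invoke the fact that a closed set of dimension at most $m-2$ cannot disconnect a ball to force the boundary to be $(m-1)$-dimensional near $\bar h$, and then pigeonhole over the finitely many boundary strata $H_s^w(f)$, $w\lessdot u$, plus closedness of their projected images to produce a positive-dimensional stratum through $h$. This works, and it buys independence from the paper's (somewhat delicate) claim that the relative boundary of a compact contractible stratum is connected; the price is that you import a dimension-theoretic separation theorem that is nowhere in the paper's toolkit, so you must justify or cite it (it is true here: the sets involved are semialgebraic, and a closed semialgebraic subset of dimension $\leq m-2$ does not disconnect a connected open set, e.g.\ via semialgebraic triangulation or classical dimension theory), and you need the bookkeeping that the relative boundary of $B$ corresponds, under $P^{d-l}$, to the topological boundary of $\bar B$ (invariance of domain) and that $P^{d-l}$ maps the closed strata $H_s^w(f)\subseteq B$ onto closed sets (this is exactly what the proof of Proposition \ref{prop:derhomeom} gives).

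Two small points to make explicit before the final step "rank $=$ dimension $+1$": you need the monotonicity that $A\subsetneq B$ implies $\dim A<\dim B$ (immediate from Theorem \ref{thm:consequences}, since nonempty strata of positive dimension have the unique occurring label $u$ and dimension $\ell(u)-s$), otherwise "covering relations drop dimension by exactly one" does not follow from your no-gap statement alone; and in the singleton case you should note that $v(h)<u$ strictly (else $B\subseteq A$), which is what places $\bar h$ on the topological boundary of $\bar B$ via Theorem \ref{thm:relint}. With these spelled out, your argument is complete.
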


\begin{proof}
Suppose a stratum $H_s^u(f)$ is strictly contained in $H_s^v(f)$, then by Theorem \ref{thm:consequences}, $\dim(H_s^u(f))<\dim(H_s^v(f))$. Also, by Lemma \ref{lem:comp}, the lattice of strata contains the empty set as its minimal element. Thus any maximal chain in the lattice of strata has length at most $\dim(H_s(f))+1=d-s+1$.

Conversely, suppose we have that $H_s^u(f)$ is strictly contained in $H_s^v(f)$, where $\dim(H_s^u(f))<\dim(H_s^v(f))-1$, let us show that there is a stratum, $H_s^w(f)$, with $H_s^u(f)\subsetneq H_s^w(f)\subsetneq H_s^v(f)$.

If $H_s^u(f)$ is empty, $H_s^v(f)$ is at least one dimensional and by Theorem \ref{thm:relint} its relative interior are the polynomials with composition $v$. But by Lemma \ref{lem:comp}, it is compact and so it must contain a nonempty stratum, $H_s^w(f)$, in its relative boundary. Then by Lemma \ref{lem:comp}, both $H_s^v(f)$ and $H_s^w(f)$ contains the empty stratum $H_s^u(f)$.

If $h\in H_s^u(f)$ and there are no polynomials in $H_s^u(f)$ with at least $s$ distinct roots, then by Theorem \ref{thm:consequences}, $H_s^u(f) =\{h\}$. Since $H_s^u(f)$ is zero dimensional, $H_s^v(f)$ is at least two dimensional and by Lemma \ref{lem:comp} and Proposition \ref{prop:contractible}, it is compact and contractible. Thus its relative boundary is at least one dimensional and connected.

If $H_s^u(f)$ is not contained in a larger stratum of $H_s^v(f)$, it must be an isolated part of the relative boundary of $H_s^v(f)$. But since the relative boundary is one dimensional and connected, $H_s^u(f)$ must be the whole boundary. But this is impossible since $H_s^u(f)$ is zero dimensional, thus there is a stratum $H_s^w(f)$ which is strictly contained in $H_s^v(f)$ and that strictly contains $H_s^u(f)$.

Lastly, if there is a $h\in H_s^u(f)$ with at least $s$ distinct roots, then we may assume $u=v(h)$ by Theorem \ref{thm:consequences}. Also, we then have that all compositions greater than $u$ occurs in $H_s(f)$. Since $$\ell(u)-s=\dim(H_s^u(f))<\dim(H_s^v(f))-1=\ell(v)-s-1,$$ then $\ell(u)<\ell(v)-1$ and so there is a composition $w$, with $u<w<v$. By Theorem \ref{thm:consequences}, $H_s^w(f)$ is of dimension $(\ell(w)-s)$ and therefore $H_s^u(f)\subsetneq H_s^w(f)\subsetneq H_s^v(f)$.

Thus any maximal chain will be at least of length $d-s+1$ and so any maximal chain has length $d-s+1$. Also, by the above argument any stratum of dimension $n\geq 0$ covers a stratum of dimension $n-1$, thus its rank must be $n+1$.
\end{proof}

Next up is the notion of atomic lattices.

\begin{definition}
In a lattice with a smallest element, $0$, the elements covering $0$ are called \textbf{atoms}. The lattice is called \textbf{atomic} if any element can be expressed as the join of atoms.
\end{definition}

\begin{lemma}\label{lem:contains}
If $s\geq 2$, then any $n$-dimensional stratum, where $n>0$, contains at least two distinct $(n-1)$-dimensional strata.
\end{lemma}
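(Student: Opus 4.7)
The plan is to pass to the homeomorphic image $Y:=P^{d-l}(H_s^u(f))\subseteq\mathbb{R}^n$ given by Proposition \ref{prop:derhomeom} (with $l=\ell(u)$ and $n=l-s$), so that $Y$ is a compact, contractible and full-dimensional subset of $\mathbb{R}^n$, and to argue by contradiction assuming that only a single $(n-1)$-dimensional stratum appears in the relative boundary. The case $n=1$ is direct: $Y$ is a compact contractible subset of $\mathbb{R}$, hence a closed interval; its two distinct endpoints each lie in some zero-dimensional stratum by Theorem \ref{thm:consequences}, and they correspond to two distinct polynomials, so to two distinct strata.

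For $n\geq 2$, suppose $H_s^v(f)$ is the unique $(n-1)$-dimensional stratum contained in $H_s^u(f)$. By Theorem \ref{thm:consequences}, every composition $w<u$ with $\ell(w)=\ell(u)-1$ other than $v$ yields an empty stratum or a single point, so the relative boundary decomposes as $Z\cup T$, where $Z:=P^{d-l}(H_s^v(f))$ is $(n-1)$-dimensional and contractible by Proposition \ref{prop:contractible}, and $T$ is the union of the remaining strata and has dimension at most $n-2$. Since $Y$ is full-dimensional, this decomposition also describes the topological boundary $\partial Y$ in $\mathbb{R}^n$.

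I would then reach a contradiction by computing $\tilde H_{n-1}(\partial Y)$ in two ways. First, Alexander duality: $Y$ is compact and contractible, so $S^n\setminus Y$ is connected (since $\tilde H_0(S^n\setminus Y)\cong \tilde H^{n-1}(Y)=0$); together with the nonempty open interior of $Y$, the set $S^n\setminus\partial Y$ has at least two components, giving $\tilde H_{n-1}(\partial Y)\neq 0$. Second, picking a semi-algebraic triangulation of $\partial Y$ in which $Z$ is a subcomplex, every cell outside $Z$ has dimension at most $n-2$, so $\partial Y/Z$ has dimension at most $n-2$ and hence $\tilde H_{n-1}(\partial Y/Z)=0$; because $Z$ is contractible and a CW subcomplex, the collapse $\partial Y\to \partial Y/Z$ is a homotopy equivalence, forcing $\tilde H_{n-1}(\partial Y)=0$. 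This contradiction rules out the assumption.

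The main obstacle is verifying that the topological tools apply cleanly in this semi-algebraic setting: the existence of a semi-algebraic triangulation compatible with the stratification, Alexander duality for a compact semi-algebraic subset of $\mathbb{R}^n$, and the standard fact that collapsing a contractible CW subcomplex yields a homotopy equivalence. All three are available off the shelf, but each needs careful invocation; the core geometric content is really just the dichotomy between the nontrivial top homology of $\partial Y$ (forced by the full-dimensionality and contractibility of $Y$) and the contractibility of a single $(n-1)$-dimensional facet.
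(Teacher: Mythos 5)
Your argument is sound and reaches the conclusion by a genuinely different mechanism than the paper. The paper's proof is shorter and softer: it gets one $(n-1)$-dimensional stratum $H_s^v(f)$ for free from Proposition \ref{prop:graded}, notes via Lemma \ref{lem:comp} that the compact, contractible stratum $H_s^u(f)$ has a nonempty and non-contractible relative boundary, so the contractible set $H_s^v(f)$ (Proposition \ref{prop:contractible}) cannot exhaust it, and then uses that $\Relbd(H_s^u(f))\setminus H_s^v(f)$ is relatively open, hence $(n-1)$-dimensional, to produce a second $(n-1)$-dimensional stratum. You instead make the topological obstruction explicit and homological: after transporting everything to $Y=P^{d-l}(H_s^u(f))\subseteq\mathbb{R}^n$ via Proposition \ref{prop:derhomeom}, Alexander duality for the compact, contractible, full-dimensional set $Y$ forces $\tilde H_{n-1}(\partial Y)\neq 0$, while a semi-algebraic triangulation compatible with the single $(n-1)$-dimensional piece $Z$ and the collapse $\partial Y\to \partial Y/Z$ would force $\tilde H_{n-1}(\partial Y)=0$. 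What your route buys is rigour at exactly the two places where the paper is terse (why the boundary is not contractible, and why the part of the boundary left over after removing one stratum is still $(n-1)$-dimensional), at the cost of heavier machinery (semi-algebraic triangulation, Čech--singular comparison in the duality, homotopy extension for CW pairs), all of which is indeed available off the shelf here since every set in sight is compact and semi-algebraic.

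Two small points to tighten. First, the negation of the statement also allows \emph{zero} $(n-1)$-dimensional strata inside $H_s^u(f)$, whereas your contradiction hypothesis assumes exactly one; this case is disposed of either by your own first computation (then $\partial Y$ has dimension at most $n-2$, so $\tilde H_{n-1}(\partial Y)=0$ without any collapsing) or, as the paper does, by quoting Proposition \ref{prop:graded}, which precedes this lemma and does not rely on it. Second, the assertion that every cover $w<u$ with $\ell(w)=\ell(u)-1$ other than $v$ yields an empty stratum or a single point is not literally what Theorem \ref{thm:consequences} gives: such an $H_s^w(f)$ may be $(n-1)$-dimensional yet coincide, as a set, with $H_s^v(f)$ under a different label (the paper explicitly warns that distinct compositions can label the same stratum). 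Since ``distinct strata'' means distinct as sets, your uniqueness assumption still yields exactly what the argument needs, namely $\partial Y=Z\cup T$ with $\dim T\leq n-2$; just phrase the dichotomy in terms of the strata rather than their labels.
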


\begin{proof}
By Proposition \ref{prop:graded}, an $n$-dimensional stratum, $H_s^u(f)$, contains an $(n-1)$-dimensional stratum $H_s^v(f)$. Also, by Lemma \ref{lem:comp}, a nonempty stratum $H_s^u(f)$ is compact and so its relative boundary is nonempty but not contractible. But by Proposition \ref{prop:contractible}, $H_s^v(f)$ is contractible and so it cannot be the whole relative boundary of $H_s^u(f)$.

Also, since $H_s^v(f)$ is closed, then $\Relbd(H_s^u(f))\backslash H_s^v(f)$ is relatively open in $\Relbd(H_s^u(f))$ with the subspace topology. Thus $\Relbd(H_s^u(f))\backslash H_s^v(f)$ is $(n-1)$-dimensional and so there must be another $(n-1)$-dimensional stratum in $H_s^u(f)$.
\end{proof}

\begin{proposition}\label{prop:atomic}
If $s\geq 2$, the lattice of strata of $H_s(f)$ is atomic.
\end{proposition}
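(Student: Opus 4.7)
The plan is to run an induction on dimension, combining the gradedness from Proposition \ref{prop:graded} with the branching of the boundary from Lemma \ref{lem:contains}. Under the assumption $s \geq 2$, Proposition \ref{prop:graded} tells us the empty stratum is the bottom of the lattice (rank $0$, dimension $-1$) and that the atoms are precisely the $0$-dimensional strata (the singleton polynomials). So the base cases are immediate: the empty stratum is the empty join of atoms and any $0$-dimensional stratum is an atom.

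For the inductive step, fix $n \geq 1$ and suppose every stratum of dimension strictly less than $n$ is a join of atoms. Let $H_s^u(f)$ be $n$-dimensional and let $J$ denote the join, taken in the lattice, of all atoms that are contained in $H_s^u(f)$. Since $H_s^u(f)$ is itself an upper bound of this family (any such atom is a subset of $H_s^u(f)$, hence $\leq H_s^u(f)$ in the inclusion order from Proposition \ref{prop:lattice}), we have $J \leq H_s^u(f)$. The goal is to rule out strict inequality.

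Suppose for contradiction that $J \subsetneq H_s^u(f)$. By Lemma \ref{lem:contains}, $H_s^u(f)$ contains two distinct $(n-1)$-dimensional strata $H_s^{v_1}(f)$ and $H_s^{v_2}(f)$. By the induction hypothesis, each of them is a join of atoms, and each such atom, being contained in $H_s^{v_i}(f) \subseteq H_s^u(f)$, is one of the atoms appearing in the definition of $J$; so each such atom is $\leq J$, and taking their join yields $H_s^{v_i}(f) \leq J$ for $i=1,2$. This forces $\dim(J) \geq n-1$. On the other hand, the strict inclusion $J \subsetneq H_s^u(f)$ gives $\dim(J) < n$ by the dimension-drop property used in the proof of Proposition \ref{prop:graded} (a consequence of Theorem \ref{thm:consequences}). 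Hence $\dim(J) = n-1$. But then $H_s^{v_1}(f) \subseteq J$ with both of dimension $n-1$ forces $H_s^{v_1}(f) = J$ by that same dimension-drop property, and analogously $H_s^{v_2}(f) = J$, contradicting $H_s^{v_1}(f) \neq H_s^{v_2}(f)$. Therefore $J = H_s^u(f)$ and the induction closes.

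The only real subtlety is to convert the abstract lattice-theoretic join, defined as a least upper bound, into concrete containment information about the stratum. This is handled by the observation that since the order on strata is inclusion, any common upper bound in the lattice is literally a superset, so $J$ is automatically contained in $H_s^u(f)$; all that remains is to squeeze $J$ from below using the two-distinct-substrata conclusion of Lemma \ref{lem:contains} and the rigid dimension-versus-rank correspondence of Proposition \ref{prop:graded}. The main (minor) obstacle is to be sure that the inductive expressions of the $(n-1)$-dimensional substrata as joins of atoms use atoms that sit below $H_s^u(f)$; but this is immediate from transitivity of inclusion.
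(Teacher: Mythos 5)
Your proposal is correct and follows essentially the same route as the paper: induction on dimension, using Lemma \ref{lem:contains} to produce two distinct $(n-1)$-dimensional substrata and the rank--dimension rigidity of Proposition \ref{prop:graded} (i.e.\ the strict dimension drop from Theorem \ref{thm:consequences}) to force the join up to the full stratum. Packaging the step as a contradiction about the join $J$ of all atoms below $H_s^u(f)$, rather than identifying $H_s^u(f)$ directly as the join of the two codimension-one substrata, is only an organizational difference.
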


\begin{proof}
By convention the empty set is the join of an empty set of atoms and an atom is naturally the join of itself. Also, by Proposition \ref{prop:graded}, the lattice is graded and a stratum's rank is its dimension plus one, so the atoms are the zero dimensional strata.

If $H_s^u(f)$ is an $n$-dimensional stratum, where $n>0$, then by Lemma \ref{lem:contains}, there are two distinct $(n-1)$-dimensional strata, $H_s^v(f)$ and $H_s^w(f)$, contained in $H_s^u(f)$. Since $H_s^v(f)$ and $H_s^w(f)$ are distinct, by Proposition \ref{prop:graded}, any stratum that contains both must be at least $n$-dimensional. Since $H_s^u(f)$ is $n$-dimensional and contains both $H_s^v(f)$ and $H_s^w(f)$, it must be the join of $H_s^v(f)$ and $H_s^w(f)$. By induction, both $H_s^v(f)$ and $H_s^w(f)$ are joins of atoms and since $H_s^u(f)$ is the join of $H_s^v(f)$ and $H_s^w(f)$, it must also be a join of atoms.
\end{proof}

Lastly, we look at a sort of converse of atomic lattices called coatomic lattices.

\begin{definition}
In a lattice with a largest element, $1$, the elements covered by $1$ are called \textbf{coatoms}. The lattice is called \textbf{coatomic} if any element can be expressed as the meet of coatoms.
\end{definition}

\begin{lemma}\label{lem:contin}
If $s\geq 2$, then any $n$-dimensional stratum, where $n<d-s-1$, is contained in at least two distinct $(n+1)$-dimensional strata.
\end{lemma}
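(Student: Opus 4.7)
My plan is to produce two distinct $(n+1)$-dimensional strata containing $H_s^u(f)$ by constructing two different length-$(\ell(u)+1)$ compositions $u'$, $u''$ that refine $u$, then applying Theorem \ref{thm:consequences} and Theorem \ref{thm:relint} to verify each corresponding stratum has the right dimension and that the two strata differ as subsets of $H_s(f)$. First I would reduce to the canonical labeling: by Theorem \ref{thm:consequences} and Theorem \ref{thm:relint}, I may replace $u$ by $v(h)$ where $h$ lies in the relative interior of $H_s^u(f)$ (or $h$ is the unique point if $n=0$); for $n\geq 1$ this gives $\ell(u)=n+s$ with $h$ having $n+s$ distinct roots, while for $n=0$ it gives $\ell(u)\leq s$.

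In the generic case $\ell(u)\geq s$, the hypothesis $n<d-s-1$ yields $\sum_i(u_i-1)=d-\ell(u)\geq 2$, so either two parts $u_i,u_j$ (with $i\neq j$) are $\geq 2$ or one part $u_i\geq 3$. In the first subcase I would construct $u'$ by splitting $u_i$ as $(1,u_i-1)$ and $u''$ by splitting $u_j$ as $(1,u_j-1)$; in the second subcase I would split the single large $u_i$ in two different ways, for instance as $(1,u_i-1)$ for $u'$ and as $(2,u_i-2)$ for $u''$, both allowed because $u_i\geq 3$. Either way $u'\neq u''$, $\ell(u')=\ell(u'')=\ell(u)+1$, and $u\leq u',u''$, so $h\in H_s^{u'}(f)\cap H_s^{u''}(f)$ has at least $s$ distinct roots. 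Theorem \ref{thm:consequences} then forces $\dim H_s^{u'}(f)=\dim H_s^{u''}(f)=n+1$, and since the relative interiors consist of polynomials with compositions $u'$ and $u''$ by Theorem \ref{thm:relint}, the two strata are distinct.

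The main obstacle is the edge case $n=0$ with $\ell(u)<s$, where $h$ has fewer than $s$ distinct roots and the splitting argument alone cannot guarantee via Theorem \ref{thm:consequences} that the length-$(s+1)$ refinements of $u$ label $1$-dimensional strata. To handle this case I would use Proposition \ref{prop:graded} to obtain one $1$-dimensional stratum $H_s^{w_1}(f)$ containing $\{h\}$, then pick a $2$-dimensional stratum $H_s^v(f)$ containing $\{h\}$ (available again by Proposition \ref{prop:graded} since $d-s\geq 2$), and argue that $\{h\}$, being $0$-dimensional on the relative boundary of the compact contractible set $H_s^v(f)$, cannot sit in the relative interior of a single $1$-dimensional sub-stratum and so must be a corner shared by at least two $1$-dimensional strata of $H_s^v(f)$. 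Making this corner argument rigorous in the semi-algebraic setting is the technical difficulty I anticipate.
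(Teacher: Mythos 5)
Your main line of argument is the same as the paper's: for $n\geq 1$ the stratum is maximal dimensional, so $u$ itself occurs, $\ell(u)=n+s<d-1$ forces two distinct covers $u',u''$ of $u$ of length $\ell(u)+1$, and Theorem \ref{thm:consequences} together with Theorem \ref{thm:relint} gives two distinct $(n+1)$-dimensional strata containing $H_s^u(f)$; this part of your write-up is correct, and your observation that the splitting argument also disposes of the $n=0$ case when the unique polynomial has exactly $s$ distinct roots is a small clean-up of the paper, which instead runs all of $n=0$ through the boundary argument. However, there are two genuine gaps. First, you never treat $n=-1$: by the paper's convention the empty set is a $(-1)$-dimensional stratum (Lemma \ref{lem:comp} puts it in the lattice, and the convention $\dim\emptyset=-1$ is fixed just before Proposition \ref{prop:graded}), and the statement $n<d-s-1$ includes it; this case is needed for the coatomicity induction. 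It is not hard — the empty stratum is contained in every stratum, so one only needs two distinct $0$-dimensional strata, which the paper extracts from Proposition \ref{prop:atomic} (an at least one-dimensional $H_s(f)$ has at least two atoms) — but your proof as written simply does not cover it.

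Second, in the remaining edge case ($n=0$ with fewer than $s$ distinct roots) you correctly identify the right strategy — a $2$-dimensional stratum $H_s^w(f)$ and a $1$-dimensional stratum $H_s^v(f)$ through $h$ via Proposition \ref{prop:graded}, then a "corner" argument on $\Relbd(H_s^w(f))$ — but you explicitly leave the corner argument unproved, and that is precisely the substantive step. The ingredient you are missing is Corollary \ref{cor:closure}: since $H_s^w(f)$ is the closure of its relative interior, $h$ lies in the closure of a connected component of $\Relint(H_s^w(f))$, while Theorem \ref{thm:relint} (plus Lemma \ref{lem:point}) shows $h$ is a relative boundary point of the arc $H_s^v(f)$, so the composition-$v$ points approach $h$ from at most one side along $\Relbd(H_s^w(f))$; points accumulating at $h$ from the other side carry finitely many possible compositions $v'<w$, so some $H_s^{v'}(f)$ with $v'\neq v$ is infinite (hence one-dimensional by Theorem \ref{thm:consequences}) and, being closed by Lemma \ref{lem:closedViete}, contains $h$. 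Without an argument of this kind your proposal establishes the lemma only for $n\geq 1$ and part of $n=0$, so as it stands it is an incomplete proof rather than a complete alternative.
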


\begin{proof}
Let $H_s^u(f)$ be an $n$-dimensional stratum. If $n>0$, by Theorem \ref{thm:consequences}, it must be maximal dimensional. Thus there is a polynomial with composition $u$ and since $\ell(u)-s=n<d-s-1$, then $\ell(u)<d-1$, so $u$ is covered by at least two distinct compositions, $v$ and $w$, of length $\ell(u)+1$. So again by Theorem \ref{thm:consequences}, $H^v_s(f)$ and $H^w_s(f)$ must be $(n+1)$-dimensional.

If $n=0$, then $H_s^u(f)=\{h\}$ and since $d-s>1$, the set $H_s(f)$ is at least two dimensional. By Proposition \ref{prop:graded}, $h$ is contained in a two dimensional stratum, $H_s^w(f)$, and a one dimensional stratum, $H_s^v(f)\subset H_s^w(f)$. By Theorem \ref{thm:relint}, $H_s^v(f)$ is in the one dimensional relative boundary of $H_s^w(f)$ and $h$ is in the relative boundary of $H_s^v(f)$.

According to Corollary \ref{cor:closure}, $H_s^w(f)$ is the closure of its relative interior, so there is a connected component, $C$, of $\Relint(H_s^w(f))$ such that $h$ is in its closure, $\overline{C}$. Thus, starting from $h$, we can traverse its boundary clockwise or counter-clockwise. But since $h$ is one of the relative  boundary points of $H_s^v(f)$, at most one of the directions consists immediately of polynomials whose composition is $v$. Thus there must be some other one dimensional stratum for which $h$ is a boundary point.

Lastly, if $n=-1$, then $H_s^u(f)$ is empty. Since $H_s(f)$ is at least one dimensional and, by Proposition \ref{prop:atomic}, the lattice of strata is atomic, it must contain at least two atoms. Thus the empty stratum is contained in at least two zero dimensional strata.
\end{proof}

\begin{proposition}
If $s\geq 2$, the lattice of strata of $H_s(f)$ is coatomic.
\end{proposition}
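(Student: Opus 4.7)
The plan is to dualise the argument from Proposition \ref{prop:atomic}. By Proposition \ref{prop:graded} the lattice is graded with rank equal to dimension plus one, so the coatoms of the lattice are precisely the $(d-s-1)$-dimensional strata, i.e.\ the strata covered by the top element $H_s(f)=H_s^{(1^d)}(f)$. I will argue by downward induction on the dimension $n$ that every stratum of dimension $n$ is a meet of coatoms.

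The base cases are immediate: the top element $H_s(f)$ is the meet of the empty family of coatoms (by the usual convention), and each coatom is trivially the meet of itself. For the inductive step, I will suppose that every stratum of dimension at least $n+1$ is a meet of coatoms for some $n\leq d-s-2$, and consider an arbitrary $n$-dimensional stratum $H_s^u(f)$. Lemma \ref{lem:contin} supplies two distinct $(n+1)$-dimensional strata $H_s^v(f)$ and $H_s^w(f)$ that both contain $H_s^u(f)$; by the inductive hypothesis each of them is already a meet of coatoms, so it will be enough to check that $H_s^u(f)=H_s^v(f)\wedge H_s^w(f)$.

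The inclusion $H_s^u(f)\subseteq H_s^v(f)\wedge H_s^w(f)$ is built into the definition of the meet. For the reverse inclusion, I would use gradedness twice. First, since $H_s^v(f)$ and $H_s^w(f)$ are distinct elements of the same rank $n+2$, their meet cannot have rank $n+2$ as well (otherwise, being comparable with and at the same rank as each of them, it would force $H_s^v(f)=H_s^w(f)$), so $\dim(H_s^v(f)\wedge H_s^w(f))\leq n$. Second, Proposition \ref{prop:graded} guarantees that strict containment entails a strict drop in dimension, so the containment of the $n$-dimensional $H_s^u(f)$ in a stratum of dimension at most $n$ must be an equality.

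There is no serious obstacle: Lemma \ref{lem:contin} is doing the heavy lifting and the remainder is routine rank bookkeeping in a graded lattice. The only subtlety worth highlighting is that the hypothesis $s\geq 2$ is used exactly where needed, namely indirectly through Lemma \ref{lem:contin} (which in turn invokes the compactness provided by Lemma \ref{lem:comp}), matching the way $s\geq 2$ entered the proof of atomicity.
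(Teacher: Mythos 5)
Your proof is correct and is essentially the paper's own argument: the paper's proof simply says to dualise the proof of Proposition \ref{prop:atomic}, starting the induction at the $(d-s-1)$-dimensional strata and using Lemma \ref{lem:contin} in place of Lemma \ref{lem:contains}, which is exactly the downward induction with the rank bookkeeping from Proposition \ref{prop:graded} that you spell out.
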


\begin{proof}
The argument is analogous to the proof of Proposition \ref{prop:atomic}, just start the induction from the $(d-s-1)$-dimensional strata and use Lemma \ref{lem:contin} instead of Lemma \ref{lem:contains} for the induction step.
\end{proof}

\begin{theorem}\label{thm:latticeprops}
The lattice of strata of $H_s(f)$ is graded, atomic and coatomic.
\end{theorem}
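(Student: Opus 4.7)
The plan is to split the proof according to the value of $s$. For $s \geq 2$, the theorem is an immediate consequence of Propositions \ref{prop:graded}, \ref{prop:atomic} and the preceding (coatomic) proposition, all of which are already established. So the substantive content is the excluded range $s \in \{0, 1\}$, and my strategy there is to reduce the statement to the well-known combinatorial properties of a Boolean lattice via the map $u \mapsto H^u_s(f)$.

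For $s \in \{0,1\}$, I would first show that $u \mapsto H^u_s(f)$ is a poset isomorphism from the lattice of compositions of $d$ onto the lattice of strata of $H_s(f)$. Theorem \ref{thm:consequences} guarantees that in this range every composition $u$ of $d$ is realised in $H_s(f)$, so the map lands in the nonempty strata and is surjective onto them. Theorem \ref{thm:relint} identifies the relative interior of $H^u_s(f)$ as exactly the polynomials of composition $u$, which forces both injectivity and order-reflection: if $H^v_s(f) \subseteq H^u_s(f)$, then a polynomial of composition $v$ lies inside $H^u_s(f)$, and this can only happen if $v \leq u$. Order-preservation in the other direction is built into the definition of the strata.

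Next, I would identify the lattice of compositions of $d$ with the Boolean lattice $B_{d-1}$ on $\{1,\ldots,d-1\}$ through the partial-sum bijection already used in Lemma \ref{lem:latticecomps}, sending $(u_1,\ldots,u_\ell)$ to $\{u_1, u_1+u_2,\ldots, u_1+\cdots+u_{\ell-1}\}$; this intertwines the comma-to-plus order on compositions with subset inclusion, as recorded in that lemma. Since $B_{d-1}$ is graded by cardinality, atomic with the singletons as atoms, and coatomic with the $(d-2)$-subsets as coatoms, all three properties transfer through the isomorphism to the lattice of strata of $H_s(f)$ for $s\in\{0,1\}$.

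The only step that requires any care is the order-reflection in the isomorphism claim, because different compositions could a priori label the same stratum; this is precisely what the relative-interior description of Theorem \ref{thm:relint}, combined with the realisability statement of Theorem \ref{thm:consequences}, rules out. Beyond that I do not anticipate any obstacle, since the Boolean lattice facts are standard and the $s \geq 2$ case is already in hand.
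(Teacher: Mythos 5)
Your proposal is correct and follows essentially the same route as the paper: the case $s\geq 2$ is delegated to the earlier propositions, and for $s\in\{0,1\}$ the lattice of strata is identified with the lattice of compositions (using that every composition is realised, which is established in the proof of Theorem \ref{thm:consequences}), which is then recognised as a standard graded, atomic and coatomic lattice. The only cosmetic difference is that you pass to the Boolean lattice $B_{d-1}$ via the partial-sum bijection of Lemma \ref{lem:latticecomps}, whereas the paper phrases the same identification as an isomorphism with the face lattice of a $(d-2)$-simplex and cites Ziegler.
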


\begin{proof}
The only cases that remains to check is when $s\leq 1$. As we saw in the proof of Theorem \ref{thm:consequences}, all compositions occur when $s\leq 1$ so the lattice is isomorphic to the lattice of compositions. The lattice of compositions is isomorphic to the face lattice of a $(d-2)$-dimensional simplex, $S$. To see this, note that any $k+1$ vertices in $S$ determines a $k$-dimensional face of $S$.

Similarly there are $d-1$ compositions of $d$ of length $2$. So if $v^{i_1},...,v^{i_{k+1}}$ are $k+1$ distinct compositions of length $2$, then their first parts are distinct. We may assume they are ordered so that $v^{i_1}_1<...<v^{i_{k+1}}_1$ and so, by the argument in Lemma \ref{lem:latticecomps}, their join is $$v^{i_1}\vee ... \vee v^{i_{k+1}} = (v^{i_1}_1,v^{i_2}_1-v^{i_1}_1,v^{i_3}_1-v^{i_2}_1,...,v^{i_{k+1}}_1-v^{i_k}_1,d-v^{i_{k+1}}_1),$$
which is a composition of length $k+2$. Thus any bijection from the set of length $2$ compositions to the set of vertices of $S$ induces an isomorphism of lattices. Lastly, by point (i) and (v) of Theorem 2.7 in \cite{ziegler2012lectures}, the face lattice of a simplex is graded, atomic and coatomic.
\end{proof}

Similar to the cases when $s\leq 1$, when $s=2$ it can be shown that the lattice of strata is isomorphic to the face lattice of a simplex. Although this does no need to be true in general, as can be seen by intersecting $H_2(f)$, from Example \ref{ex:tetra}, with the affine hyperplane defined by fixing the third coefficient to be $0$. This gives us a lattice which is isomorphic to the face lattice of a quadrilateral.

Thus there are examples for which the lattice of strata is not isomorphic to the face lattice of a simplex, however it would be interesting to find an answer to the following question for general $d$ and $s$.

\begin{question}
Is the lattice of strata of a starved polytope polytopal?
\end{question}

We finish this section with an algorithm to compute which compositions occur in $H_s(f)$ based on the compositions of length at most $s$ that occurs.

\begin{algorithm}\label{alg:comps}
Suppose $H_s(f)$ is $(d-s)$-dimensional and that $s\geq 2$. Let $U$ denote the set of compositions in $H_s(f)$ of length at most $s$.

Step 1: Compute the join of every pair of compositions in $U$: $$V:=\{u\vee v|  u,v \in U\ \& \ u\neq v\}.$$

Step 2: Compute the upward closure of $V$: $$W:=\{w|\exists \ v\in V \text{ with } v\leq w \}.$$

Then $U\cup W$ is the set of all compositions occurring in $H_s(f)$.
\end{algorithm}

\begin{proof}
Let $w\in W$, then $w\geq v\vee u$ for some $v,u\in U$. Thus both $v$ and $u$ occur in $H_s(f)$ and so $H_s^w(f)$ contain at least two polynomials. So by Theorem \ref{thm:consequences}, $H_s^w(f)$ is maximal dimensional and therefore there is a polynomial with composition $w$. Thus all compositions computed in the algorithm occurs in $H_s(f)$.

Suppose a composition $u$, with $\ell(u)>s$, occurs in $H_s(f)$. Then by Theorem \ref{thm:consequences}, $H_s^u(f)$ is at least one dimensional and by Theorem \ref{thm:latticeprops}, $H_s^u(f)$ is the join of at least two distinct atoms $H_s^v(f)=\{h\}$ and $H_s^w(f)=\{g\}$. We may assume $v$ and $w$ are the compositions of $h$ and $g$ respectively. Then $v\vee w \in V$ and $v\vee w \leq u$, thus $u\in W$ and it was not left out by the algorithm.
\end{proof}

\begin{remark}
Step 1 in Algorithm \ref{alg:comps} can be accomplished using the method described in Lemma \ref{lem:latticecomps}. That is, the join of $u$ and $v$ can be computed by first constructing the set $$M=\{u_1,u_1+u_2,...,d,v_1,v_1+v_2,...,d\}.$$ Next, construct the tuple $(m_1,....,m_k)$, where the $m_i$'s are distinct, increasingly ordered and $\{m_1,...,m_k\}=M$. Then the join of $u$ and $v$ is the composition $$(m_1,m_2-m_1,m_3-m_2,...,m_l-m_{l-1}).$$ However, compositions and our partial order are both implemented in Sage (see \cite{sagemath}), so the algorithm can easily be implemented there.
\end{remark}

\begin{remark}
As in \cite{arnol1986hyperbolic}, \cite{givental1987moments} and \cite{kostov1989geometric} we could have focused on the intersection of Vandermonde varieties and $K$. That is, we could consider the set
$$M=\{x\in \mathbb{R}^d|w_1x_1^i+w_2x_2^i+...+w_dx_d^i=c_i \ \forall \ i\in [s]\}\cap K$$
were $w_1,...,w_d\in \mathbb{R}$ are positive and $c_1,...,c_s\in\mathbb{R}$. If $x\in M$ is of the form $x_1=...=x_{v_1}<x_{v_1+1}=...=x_{v_1+v_2}<...<x_{d-v_l+1}=...=x_d$, we associate to it the composition $v(x)=(v_1,v_2,...,v_d)$ and for a composition $u$ we may define a stratum of $M$ as $M^u=\{y\in M|v(y)\leq u\}$.

When the weights $w_1,...,w_d$ are integers (and by extension rational) then $M^u$ is equal to $\iota_u (V_s^u(f)\cap K_l)$ (see the proof of Lemma \ref{lem:closedViete}) for some monic, univariate hyperbolic polynomial $f$ of degree $d$. However, if the weighs are irrational we do not see how to interpret the set $M$ and so we did not consider such cases. But it can be shown that for any positive real weights Theorem \ref{thm:relint} and Theorem \ref{thm:consequences} also hold for $M^u$ and that Theorem \ref{thm:latticeprops} hold for the poset of strata of $M$.

As we lack the motivation to consider such cases we do not go into the details. But the main difference in the arguments would be to use the map $W^l:M\to \mathbb{R}^l$ given by $$x\mapsto \bigg{(}\sum_{j=1}^dw_jx_j,...,\sum_{j=1}^dw_jx_j^l\bigg{)},$$ instead of using the map $P^{d-l}\circ \Pi_u$ in the proof of Theorem \ref{thm:relint}, otherwise the arguments follow through the same way.
\end{remark}

\section{A short note on starvation}\label{starve}

We have seen that the lattice of strata of a starved polytope has many properties similar to the face lattices of polytopes, so now we discuss some results on the boundary of starved polytopes that explains the description "starved". To do this we will quickly introduce discriminants and subdiscriminants, but more information on these objects can be found in \cite{gelfanddiscriminants} and in Chapter 4 of \cite{basualgorithms}.

Let $\Delta_d$ denote the \textbf{discriminant} of a real, monic, univariate polynomial of degree $d$. This is a real polynomial in the coefficients of the univariate polynomial which has degree $d$ and it vanishes when the corresponding univariate polynomial has a repeated root.

Let $Z(\Delta_d)\subseteq \mathbb{R}^d$ denote the real algebraic set given by the discriminant. Then the points of $Z(\Delta_d)$ that corresponds to polynomials with a repeated real root, splits the space of real coefficients into $\lfloor d/2\rfloor$ regions, each of which is characterised by the number of real roots that the polynomials have.

Similarly, we let $\Delta_{d,k}$ denote the $k^{th}$ \textbf{subdiscriminant} of a real, monic, univariate polynomial of degree $d$. This is also a real polynomial in the coefficients of the univariate polynomial and the $k$ first subdiscriminants vanish when the corresponding univariate polynomial has at most $d-k$ distinct roots.

So if $h=t^d+h_1t^{d-1}+...+h_d$ lies in the real algebraic set given by the first $k$ subdiscriminants, $Z(\Delta_{d,0},...,\Delta_{d,k})\subseteq\mathbb{R}^d$, then $h$ has at most $d-k$ distinct roots. If $h$ has exactly $d-k$ distinct roots then it was show in Proposition 1.3.4 of \cite{meguerditchian1991geometrie}, that the tangent space of $Z(\Delta_{d,0},...,\Delta_{d,k})$ at $h$, lies, in a neighbourhood of $h$, in the region which locally about $h$ has the maximal number of real roots.

That is, let $T_h$ be the tangent space of $Z(\Delta_{d,0},...,\Delta_{d,k})$ at $h$, where we consider the tangent space as having been translated to $h$ and not centred at the origin. Then there is a neighbourhood $N\subset \mathbb{R}^d$, of $h$, such that $T_h\cap N$ consists of polynomials that have exactly $n$ real roots and no other polynomial in $N$ have more than $n$ real roots.

Naturally the strata of $H_s(f)$ labelled by compositions of length $d-k$, are subsets of $Z(\Delta_{d,0},...,\Delta_{d,k})$. Now suppose $h$ is hyperbolic and has composition $u$. Then if the tangent space of $H_s^u(h)$ at $h$ is well defined, it must be included in $T_h\cap A_{h_1}\cap ... \cap A_{h_s}$, where $A_{h_i}\subset\mathbb{R}^d$ is the affine hyperplane defined by fixing the $i^{th}$ coordinate to be equal to $h_i$.

The maximal number of real roots of a polynomial in $N$ is $d$ since $h$ is hyperbolic, thus $T_h\cap N \cap A_{h_1}\cap ... \cap A_{h_s}\subset H_s(h)$ and so the tangent space of $H_s^u(h)$ at $h$, intersected with $N$, must also lie in $H_s(h)$. Thus the strata of $H_s(h)$ are, in a sense, concave towards $H_s(h)$ and so in combination with Theorem \ref{thm:latticeprops} the description "starved polytope" is fairly well justified.

\textit{Disclaimer.} No polytopes were starved in the making of this article and the author does not condone the starving of polytopes or any other mathematical objects.

\section*{Index of notation}

\begin{itemize}
\item[\textbf{-}] $[s]=\{1,2,....,s\}$
\item[\textbf{-}] $f = t^d+f_1t^{d-1}+...+f_d$
\item[\textbf{-}] $H_s(f) = \{h = t^d+h_1t^{d-1}+...+h_d|h \text{ is hyperbolic and }h_i =f_i \ \forall \ i\in [s]\}$
\item[\textbf{-}] $v(h)$ denotes the composition of $h$
\item[\textbf{-}] $H_s^u(f) = \{h\in H_s(f)|v(h) \leq u\}$
\item[\textbf{-}] $\ell(u)$ denotes the length of the composition $u$
\item[\textbf{-}] For $x\in \mathbb{R}^{\ell(u)}$, $x_u = (x_1,...,x_1,...,x_{\ell(u)},...,x_{\ell(u)})$, where $x_i$ is repeated $u_i$ times.
\item[\textbf{-}] $e_i(x)$ is the $i^{th}$ elementary symmetric polynomial in $d$ variables.
\item[\textbf{-}] $p_i(x)$ is the $i^{th}$ power sum in $d$ variables.
\item[\textbf{-}] $V^u_s(f)=\{x\in\mathbb{R}^l|-e_1(x_u)-f_1=0, ..., (-1)^se_s(x_u)-f_s=0\}$
\item[\textbf{-}] $K_l=\{x\in \mathbb{R}^l|x_1\leq ... \leq x_l\}$
\item[\textbf{-}] For $x\in \mathbb{R}^l$, $\Pi_u(x) = t^d -e_1(x_u)t^{d-1}+...+(-1)^de_d(x_u)$
\item[\textbf{-}] $B_\epsilon(a)$ is the open ball around $a\in\mathbb{R}^n$ of radius $\epsilon$.
\item[\textbf{-}] $u\vee v$ denotes the join and $u\wedge v$ denotes the meet of two elements, $u$ and $v$, of a lattice.
\end{itemize}

\nocite{riener2022linear}
\nocite{meguerditchian1992theorem}
\nocite{dedieu1992obreschkoff}
\nocite{chevallier1986courbes}
\nocite{kostov2011topics}
\nocite{stanley2011enumerative}

\newpage
\printbibliography

\Addresses

\end{document}